\renewenvironment{proof}[1][\proofname]{%
	\par\pushQED{\qed}\normalfont%
	\topsep6\p@\@plus6\p@\relax
	\trivlist\item[\hskip\labelsep\bfseries#1\@addpunct{.}]%
	\ignorespaces
}{%
	\popQED\endtrivlist\@endpefalse
}
\tikzset{double line with arrow/.style args={#1,#2}{decorate,decoration={markings,%
			mark=at position 0 with {\coordinate (ta-base-1) at (0,1pt);
				\coordinate (ta-base-2) at (0,-1pt);},
			mark=at position 1 with {\d[#1] (ta-base-1) -- (0,1pt);
				\d[#2] (ta-base-2) -- (0,-1pt);
}}}}
\patchcmd{\arrowfill@}{-7mu}{-14mu}{}{}
\patchcmd{\arrowfill@}{-7mu}{-14mu}{}{}
\patchcmd{\arrowfill@}{-2mu}{-4mu}{}{}
\patchcmd{\arrowfill@}{-2mu}{-4mu}{}{}
\theoremstyle{plain}
\newtheorem{theorem}{Theorem}[section]
\newtheorem{corollary}[theorem]{Corollary}
\newtheorem{proposition}[theorem]{Proposition}
\theoremstyle{remark}
\newtheorem{remark}[theorem]{Remark}
\theoremstyle{definition}
\tikzset{double line with arrow/.style args={#1,#2}{decorate,decoration={markings,%
			mark=at position 0 with {\coordinate (ta-base-1) at (0,1pt);
				\coordinate (ta-base-2) at (0,-1pt);},
			mark=at position 1 with {\draw[#1] (ta-base-1) -- (0,1pt);
				\draw[#2] (ta-base-2) -- (0,-1pt);
}}}}
\newtheorem{theoremx}{Theorem}
\begin{document}
	
\title{On sections of configurations of points on orientable surfaces}
\author{Stavroula Makri}
\address{Institut de Mathématiques de Toulouse, 118 route de Narbonne, Toulouse, France}
\email{stavroula.makri@univ-tlse3.fr}
	
	\begin{abstract}
		We study the configuration space of distinct, unordered points on compact orientable surfaces of genus $g$, denoted $S_g$. Specifically, we address the section problem, which concerns the addition of $n$ distinct points to an existing configuration of $m$ distinct points on $S_g$ in a way that ensures the new points vary continuously with respect to the initial configuration. This problem is equivalent to the splitting problem in surface braid groups.  With an algebraic approach, for $g\geq 1$ and $m\geq 2$, we establish a necessary condition for the existence of a section, showing that if a section exists, then $n$ must be a multiple of $m+(2g-2)$. 
	For $g\geq 1$ and $m=1$, we take a geometric approach to demonstrate that a section exists for all values of $n$.
        %Using the theory of Jenkins–Strebel differentials and the corresponding ribbon graph on $S_g$, we construct, for $g\geq 1$, sections for a wide range of values of $n$, while we equip $S_g$ with a Riemann structure, which we allow to vary. In particular, we construct sections for $n = 3km + 3k(2g - 2)$, $n = 2m + 2(2g - 2) $, and $ n = (3k + 2)m + (3k + 2)(2g - 2) $, where $k \in \mathbb{N}$. %Furthermore, we focus specifically on the case of the 2-sphere, presenting for the first time a section for $m=5$. We make interesting observations comparing our results with those of Gonçalves–Guaschi and Chen–Salter on the section problem for the 2-sphere, where in their work, either the 2-sphere is considered without a Riemannian structure or the Riemannian structure is held fixed.
				\\\\
		\noindent \textit{Keywords:} Section problem; Configuration spaces; Surface braid groups
	\end{abstract}
	\maketitle
	
	\section{Introduction}
In this paper we study the space of configurations of $n$ distinct unordered points
on orientable compact surfaces of genus $g\geq 0$. Let $\Sigma$ be a connected surface and $n, m\in \mathbb{N}$. We denote the $n^{th}$ ordered configuration space of the surface $\Sigma$ by
$$F_n(\Sigma)=\{(p_1,\dots,p_n)\in\Sigma^n\ | \ p_i\neq p_j \ \text{for all}\ i,j\in\{1,\dots,n\},\ i\neq j\}.$$
The $n^{th}$ unordered configuration space is the orbit space $UF_n(\Sigma):=F_n(\Sigma)/S_n$. Similarly, we can consider the space obtained by quotienting the $(n+m)^{th}$ configuration space of $\Sigma$ by the subgroup $S_n\times S_m$ of $S_{n+m}$, that is  $UF_{n,m}(\Sigma):=F_{n+m}(\Sigma)/S_n\times S_m$. 
In \cite{fadell1962configuration}, Fadell--Neuwirth proved that for $n,m\in\mathbb{N}$, where $1\leq m<n$, and for any connected surface $\Sigma$ with empty boundary, the map $$p: F_n(\Sigma)\to F_m(\Sigma)\ \text{given by}\ p(x_1,\dots,x_m,x_{m+1},\dots,x_n) = (x_1,\dots,x_m)$$ 
is a locally-trivial fibration. The fibre over a point $(x_1,\dots,x_m)$ of the base space $F_m(\Sigma)$ may be identified with the $(n-m)^{th}$ configuration space of $\Sigma$ with $m$ punctures, $F_{n-m}(\Sigma\setminus\{x_1,\dots,x_m\})$, which we interpret as a subspace of the total space $F_n(\Sigma)$ via the injective map $i: F_{n-m}(\Sigma\setminus\{x_1,\dots,x_m\})\to F_n(\Sigma)$, defined by $i(y_1,\dots,y_{n-m})= (x_1,\dots,x_m,y_1,\dots,y_{n-m})$.
Similarly, we consider the map $q: UF_{n,m}(\Sigma)\to UF_n(\Sigma)$, defined by forgetting the last $m$ coordinates. 
For any connected surface $\Sigma$ without boundary, the map $q$ is a locally-trivial fibration, whose fibre can be identified with the unordered configuration space $UF_m(\Sigma\setminus\{x_1,\dots,x_n\})$. 

An important question to understand is the space of sections of these fibrations. That is, the existence and the construction of continuous maps $s:F_m(\Sigma)\to F_n(\Sigma)$ and $\iota:UF_n(\Sigma)\to UF_{n,m}(\Sigma)$ satisfying $p\circ s=\text{id}_{F_m(\Sigma)}$ and $q\circ \iota=\text{id}_{UF_n(\Sigma)}$ respectively. A section can naturally be considered as the map that provides $m$ additional distinct points to a given configuration of $n$ distinct points that depends continuously on the position of the given $n$ points. This problem is pivotal in topology and geometric group theory. Numerous aspects of obstructing and classifying sections of bundles over configuration and moduli spaces have been explored, for example in \cite{MR4185935}, \cite{chen}, \cite{chenW}, \cite{kra} and \cite{hubbard}.

In \cite{fox1962braid} Fox--Neuwirth proved that
$$\pi_1(F_n(\Sigma))\cong P_n(\Sigma)\ \text{and}\ \pi_1(UF_n(\Sigma))\cong B_n(\Sigma),$$ where $P_n(\Sigma)$ is the pure braid groups of $\Sigma$ and $B_n(\Sigma)$ is the full braid groups of $\Sigma$.
Similarly, $\pi_1(UF_{n,m}(\Sigma))\cong B_{n,m}(\Sigma)$, where $B_{n,m}(\Sigma)$ is a subgroup of $B_{n+m}(\Sigma)$ known as a mixed braid group of $\Sigma$. From the long exact sequence in
homotopy of the fibration $p$ we obtain the Fadell--Neuwirth pure braid group short exact sequence: 
\begin{equation}\label{s1}
	\begin{tikzcd}
		1 \ar[r]& P_{n-m}(\Sigma\setminus\{x_1,\dots,x_m\}) \ar[r, "\bar{i}"] & P_n(\Sigma) \ar[r, "\bar{p}"] & P_m(\Sigma)\ar[r] & 1,
	\end{tikzcd}
\end{equation}
where $m\geq3$ when $\Sigma$ is the $2-$sphere $S^2$, $m\geq2$ when $\Sigma$ is the projective plane $\mathbb{R}P^2$, and where $\bar{i}$ and $\bar{p}_{n,m}$ are the homomorphisms induced by the maps $i$ and $p$ respectively. The homomorphism $\bar{p}:P_n(\Sigma)\to P_m(\Sigma)$ can be considered geometrically as the map that forgets the last $n-m$ strands of the braid. Similarly, from the long exact sequence in
homotopy of the fibration $q$ we obtain the so-called generalised Fadell--Neuwirth short exact sequence:
\begin{equation}\label{s2}
	\begin{tikzcd}
		1 \ar[r]& B_m(\Sigma\setminus\{x_1,\dots,x_n\}) \ar[r] & B_{n,m}(\Sigma) \ar[r, "\bar{q}"] & B_n(\Sigma)\ar[r] & 1,
	\end{tikzcd}
\end{equation}
where $n\geq3$ when $\Sigma=S^2$ and $n\geq2$ when $\Sigma=\mathbb{R}P^2$, and where $\bar{q}$ is the homomorphism induced by the map $q$. The homomorphism $\bar{q}:B_{n,m}(\Sigma)\to B_n(\Sigma)$ can be considered geometrically as the epimorphism that forgets the last $m$ strands of the braid. 

Thus, the section problem in the level of configuration spaces translates to an equally important problem in braid group theory, called the splitting problem. This problem refers to the question of whether or not the homomorphisms $\bar{p}$ and $\bar{q}$ admit a section, or equivalently, whether or not the short exact sequences \eqref{s1} and \eqref{s2} split, which has been a central question to the theory of surface braid groups. The existence of section of $\bar{p}$ was
studied  by Fadell \cite{fadellHomotopy}, Fadell--Neuwirth \cite{fadell1962configuration}, Fadell--Van Buskirk \cite{fadell1962braid}, Van Buskirk \cite{van1966braid} and Birman \cite{https://doi.org/10.1002/cpa.3160220104}, approaching it either geometrically or algebraically, and a complete solution was given by Gon\c{c}alves--Guaschi in \cite{gonccalves2010braid}. On the other hand, the splitting problem for generalised Fadell--Neuwirth short exact sequence \eqref{s2} does not yet have a complete solution. In particular, the only surface, besides the plane $\mathbb{R}^2$, for which this problem has been studied is the 2-sphere by Gon\c{c}alves--Guaschi \cite{gonccalves2005braid} and Chen--Salter \cite{MR4185935} and the projective plane by the author \cite{makri1}. It is important to note that in none of these cases a complete solution has been given, since the construction of such sections and even their simple existence is a challenging problem.
	
Let $S_g$ be an orientable compact surface of genus $g\geq 1$. In the first part of this work, we study the section problem for the space of unordered configurations of	$S_g$. In particular, first we approach the problem algebraically, studying the short exact sequence
	\begin{equation}\label{ms}
		\begin{tikzcd}
			1 \ar[r]& B_n(S_g\setminus\{x_1,\dots,x_m\}) \ar[r] & B_{n,m}(S_g) \ar[r, "\bar{q}"] & B_m(S_g)\ar[r] & 1,
		\end{tikzcd}
	\end{equation}
	that corresponds to the fibration $q: UF_{n,m}(S_g)\to UF_m(S_g)$. Using an algebraic method, that we describe in detail in Section \ref{S3}, which has been initially presented in \cite{gonccalves2010braid} and \cite{gonccalves2005braid}, we obtain, for $m\geq 2$, necessary conditions for the homomorphism $\bar{q}$ to admit a section. To do so, we obtain first a presentation for the mixed braid group $B_{n,m}(S_g)$ as well as for a certain quotient of that group, presented in Section \ref{S2}. For that, we use a presentation of $B_m(S_g)$ and $B_n(S_g\setminus\{x_1,\dots, x_m\})$, given in \cite{bellingeri2011exact}. To be compatible with the presentation of $B_n(S_g\setminus\{x_1,\dots, x_m\})$, $\bar{q}$ is considered to be the map that forgets the first $n$ strands of the braid and not the last $m$ strands. Moreover, for $m=1$, we construct, for all values of $n$, geometric sections for the fibration $q: UF_{n,1}(S_g)\to UF_1(S_g)$, which corresponds to the splitting of the short exact sequence \eqref{ms}. Thus, we obtain the following theorem which we prove in Section \ref{S3}.

	\begin{theoremx}\label{thm1}
		Let $g\geq 1$ and $n,m\geq1$.
		For $m=1$ the short exact sequence \eqref{ms} splits for all values of $n$ and $g$.
		For $m\geq 2$, if the short exact sequence \eqref{ms} splits, then $n=km + k(2g-2)$, for $k\in \mathbb{N}$.
	\end{theoremx}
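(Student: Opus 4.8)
The plan is to treat the two regimes separately, as the statement itself suggests: for $m\geq 2$ I would derive the divisibility as an algebraic obstruction read off from a presentation, while for $m=1$ I would exhibit a section by hand. It is worth noting at the outset that $m+(2g-2)=-\chi(S_g\setminus\{x_1,\dots,x_m\})$, so the claim is that $|\chi|$ of the fibre surface must divide $n$; this is the shadow of an Euler-class/winding obstruction, and it is exactly the quantity one expects to see when comparing the defining surface relation of the closed surface $S_g$ against the number of strands being transported.

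For the necessary condition ($m\geq 2$) I would argue as follows. Assume $\bar q$ admits a section $s\colon B_m(S_g)\to B_{n,m}(S_g)$ of \eqref{ms}. Using the presentation of $B_{n,m}(S_g)$ and its quotient obtained in Section \ref{S2}, I would pass to the quotient in which the fibre $B_n(S_g\setminus\{x_1,\dots,x_m\})$ is replaced by a convenient abelian quotient, so that \eqref{ms} descends to an extension of $B_m(S_g)$ by an abelian group $A$ on which $B_m(S_g)$ acts, and $s$ descends to a section of this abelianised extension. Concretely I would use the winding-number homomorphism $w$ recording the total linking of the $n$ transported strands with the $m$ forgotten strands (equivalently, the appropriate coordinate of $A$), and track how each generator of $B_m(S_g)$ must act. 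Writing $s$ on the generators of $B_m(S_g)$ as a standard lift times a fibre correction and imposing the relations of $B_m(S_g)$, the decisive relation is the surface relation $\prod_{j=1}^{g}[a_j,b_j]=\Delta$ equating the product of handle commutators to the full twist on the $m$ strands. Evaluating both sides in the quotient: the left-hand side, a product of commutators together with the loops about the $m$ punctures, contributes the term $2g-2+m$, while the full twist transports the $n$ strands and contributes a multiple of $n$. Comparing the two sides forces $n$ to be an integer multiple of $m+(2g-2)$, which is the asserted relation $n=k(m+2g-2)$.

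For the existence statement ($m=1$) I would give a direct geometric construction, using that $UF_1(S_g)=S_g$, so a section is precisely a continuous assignment to each $y\in S_g$ of a set of $n$ distinct points of $S_g$, all different from $y$. For $g=1$ the group structure of the torus does this immediately: fix $n$ distinct nonzero classes $v_1,\dots,v_n$ and send $y\mapsto\{y+v_1,\dots,y+v_n\}$. For $g\geq 2$ there is no such structure and, crucially, the $n$ points cannot be confined to a disc around $y$, since placing framed points near a single moving point would reproduce the tangent-bundle Euler class $n(2-2g)\neq 0$ as an obstruction. I would therefore spread the $n$ points over $S_g$ in a balanced way, using a fixed-point-free involution $\iota$ of $S_g$ (the deck transformation of the orientation double cover $S_g\to N_{g+1}$, which exists for every $g\geq 1$); because $\iota$ reverses orientation, a configuration built symmetrically about $y$ and $\iota(y)$ has cancelling windings, so the global obstruction vanishes and the family closes up. The work here is to write the family explicitly and to verify continuity, pairwise distinctness, and disjointness from $y$ at every $y$, including where the auxiliary data degenerate. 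Finally I would remark that the obstruction of the first part is genuinely absent here: $B_1(S_g)=\pi_1(S_g)$ carries no braid generators and only the relation $\prod_j[a_j,b_j]=1$, so the full-twist comparison above becomes vacuous, consistent with existence for all $n$.

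The main obstacle is twofold. On the algebraic side it is to identify the right quotient of $B_{n,m}(S_g)$ — small enough that the computation is tractable, yet large enough to retain the winding information — and then to evaluate the surface relation in it cleanly, since it is precisely the bookkeeping of the handle commutators, the puncture loops, and the full twist that produces the exact value $m+2g-2$ rather than some coarser multiple. On the geometric side the difficulty is to make the $m=1$ family globally continuous for $g\geq 2$ while keeping the $n$ points distinct and off the base point; confining them locally fails on Euler-class grounds, so the construction must be genuinely non-local and its continuity checked directly.
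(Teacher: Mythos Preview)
For $m\geq 2$ your plan is essentially the paper's: abelianise the fibre (i.e.\ take $N=\Gamma_2(\beta_{n,m})$), write the induced section on the generators of $B_m(S_g)$ as a standard lift times an element of the abelian kernel, and read off the constraint from the surface relation. Two points where your sketch is looser than what is actually required. First, the surface relation alone does not suffice: the paper must first run the section through the braid relations and the relations $(R_1)$--$(R_4)$ of $B_m(S_g)$ in order to pin down the shape of $s_*(\tau_i)$, $s_*(c_r)$, $s_*(d_r)$ (this is where the single integer $k$ emerges as $\bar k_{r,r}=\tilde l_{r,r}=-(m_{1,1}+m_{1,2})$); only after this simplification does the surface relation become a one-line comparison. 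Second, your accounting ``LHS contributes $2g-2+m$, full twist contributes a multiple of $n$'' is not quite how the numbers fall out: in the quotient the commutator side contributes $z_1^{2kg}$, the full-twist side contributes $z_1^{-k(m-2)}$, and the $n$ enters through the relation $(\overline{SR})$ of Theorem~\ref{pres3}, which in the quotient reads $[c_1,d_1^{-1}]\cdots[c_g,d_g^{-1}]\tau_1^{-1}\cdots\tau_{m-1}^{-2}\cdots\tau_1^{-1}=z_1^{-n}$. Matching exponents then gives $n=2kg+k(m-2)=k(m+2g-2)$.

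For $m=1$ your route diverges from the paper and is, as you say, unfinished: you propose balancing points about $y$ and $\iota(y)$ for a fixed-point-free orientation-reversing involution $\iota$, but never say how to manufacture $n$ distinct points from this, and the local-framing issue you raise does not obviously disappear just by doubling. The paper's construction is both simpler and uniform in $g\geq 1$: choose a meridian circle $C\subset S_g$ with a retraction $r\colon S_g\to C$, and set $f_i=R_i\circ r$ where $R_i$ is rotation of $C$ by $2\pi i/(n+1)$. Then $\mathrm{id}_{S_g},f_1,\dots,f_n$ are pairwise coincidence-free, so $y\mapsto(\{f_1(y),\dots,f_n(y)\},y)$ is a section of $q\colon UF_{n,1}(S_g)\to UF_1(S_g)$. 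No Euler-class balancing or involution is needed; the retraction to a circle already kills the tangential obstruction.
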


\begin{remark}\label{equiv}
	Applying a standard argument given in \cite{gonccalves2005braid}, which tells us that the ﬁbration $q: UF_{n,m}(S_g)\to UF_m(S_g)$ admits a section if and only if the short exact sequence \eqref{ms} splits, we can reformulate our first result as follows: For $m\geq 2$, if the ﬁbration $q: UF_{n,m}(S_g)\to UF_m(S_g)$ admits a section, then $n=km + k(2g-2)$, for $k\in \mathbb{N}$.
\end{remark}

	\section{Presentation of $B_{n,m}(S_g)$, for $g\geq1$}\label{S2}
	Our convention is that throughout this text we read the elements of the braid groups from left to right.
	We denote by $S_g$ a closed orientable surface of genus $g\geq 1$. 
	In the following result we give a presentation of the braid groups $B_m(S_g)$, given by Bellingeri, which we use in order to determine a presentation of a certain quotient of $B_{n,m}(S_g)$.
	
	\begin{theorem}[Bellingeri, \cite{paolo}]\label{pres1}
		Let $m\geq 1$ and $S_g$ a closed orientable surface of genus $g\geq 1$. The following constitutes a presentation of $B_m(S_g)$.
		\\
		\\\textbf{Generators:} $\sigma_1, \dots, \sigma_{m-1}$, $a_1,\dots, a_g$, $b_1,\dots, b_g$.
		\\\textbf{Relations:} \begin{enumerate}
			\item[$(BR)$] $\sigma_i\sigma_j=\sigma_j\sigma_i$, for $|i-j|>1$,
			\item[] $\sigma_i\sigma_{i+1}\sigma_i = \sigma_{i+1}\sigma_i\sigma_{i+1}$, for $1\leq i\leq m-2,$
			\item[$(R_1)$] $a_r\sigma_i=\sigma_i a_r$, for $1\leq r\leq g$, $i\neq 1$,
			\item[] $b_r\sigma_i=\sigma_i b_r$, for $1\leq r\leq g$, $i\neq 1$,
			\item[$(R_2)$]$ \sigma_1^{-1}a_r\sigma_1^{-1}a_r=
			     a_r\sigma_1^{-1}a_r\sigma_1^{-1}$, for $1\leq r\leq g,$
			\item[] $\sigma_1^{-1}b_r\sigma_1^{-1}b_r=
			        b_r\sigma_1^{-1}b_r\sigma_1^{-1}$, for $1\leq r\leq g,$        
			\item[$(R_3)$]$ \sigma_1^{-1}a_s\sigma_1a_r=
			 a_r\sigma_1^{-1}a_s\sigma_1$, for $s < r,$
		    \item[] $\sigma_1^{-1}b_s\sigma_1b_r=
			 b_r\sigma_1^{-1}b_s\sigma_1$, for $s < r,$       
			 \item[] $\sigma_1^{-1}a_s\sigma_1b_r=
			 b_r\sigma_1^{-1}a_s\sigma_1$, for $s<r,$        
			 \item[] $\sigma_1^{-1}b_s\sigma_1a_r=
			 a_r\sigma_1^{-1}b_s\sigma_1$, for $s < r,$   
			 \item[$(R_4)$] $\sigma_1^{-1}a_r\sigma_1^{-1}b_r=b_r\sigma_1^{-1}a_r\sigma_1,$ for $1\leq r\leq g$,
			\item[$(SR)$] $[a_1, b_1^{-1}]\cdots[a_g, b_g^{-1}]=\sigma_1\sigma_2\cdots\sigma_{m-1}^{2}\cdots\sigma_2\sigma_1$, where $[a,b]=aba^{-1}b^{-1}$.
			        
		\end{enumerate}	
	\end{theorem}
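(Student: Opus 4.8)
The plan is to prove the presentation by induction on the number of strands $m$, using at each stage a Fadell--Neuwirth fibration together with the classical procedure for reading off a presentation of the middle term of a group extension from presentations of its kernel and its quotient. The base case $m=1$ is immediate: $B_1(S_g)=\pi_1(S_g)$, and setting $m=1$ removes the braid generators (there are none) and turns $(SR)$ into $\prod_{r=1}^{g}[a_r,b_r^{-1}]=1$, which is a one-relator presentation of the genus-$g$ surface group. Since $g\geq 1$, every punctured surface $S_g\setminus\{x_1,\dots,x_{m-1}\}$ has free fundamental group, so none of the low-strand exceptions attached to $S^2$ and $\mathbb{R}P^2$ in \eqref{s1} arise and all the relevant kernels are free.

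For the inductive step I would not look for a strand-forgetting homomorphism $B_m(S_g)\to B_{m-1}(S_g)$, which does not exist for full braid groups because the strands are unordered. Instead I would first present the pure braid group $P_m(S_g)$ through the one-strand form of the Fadell--Neuwirth sequence \eqref{s1},
\begin{equation*}
1\longrightarrow \pi_1\bigl(S_g\setminus\{x_1,\dots,x_{m-1}\}\bigr)\longrightarrow P_m(S_g)\longrightarrow P_{m-1}(S_g)\longrightarrow 1,
\end{equation*}
whose kernel is free of rank $2g+m-2$, and then recover $B_m(S_g)$ from the extension $1\to P_m(S_g)\to B_m(S_g)\to S_m\to 1$. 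In each extension the standard method applies: a presentation of the middle group is obtained from the relators of the kernel, the relators of the quotient lifted to words in the kernel, and the relations recording the conjugation action of the lifted quotient-generators on the kernel-generators. The surface generators $a_r,b_r$ enter as the loops of the first strand around the handles, and the braid generators $\sigma_1,\dots,\sigma_{m-1}$ as lifts of the adjacent transpositions of $S_m$.

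I would then match these relations with those of the statement. The braid relations $(BR)$ are forced by the classical braid relations inside a disk that contains all the braiding, hence are inherited from the disk braid group; $(R_1)$ records that $a_r,b_r$, supported near the first strand, commute with every $\sigma_i$ of disjoint support, that is $i\neq 1$; $(R_2)$, $(R_3)$ and $(R_4)$ encode the conjugation action of $\sigma_1$ on the surface generators; and $(SR)$ is the image of the genus relation of $\pi_1(S_g)$: the loop $\prod_{r=1}^{g}[a_r,b_r^{-1}]$ of the first strand, trivial on the closed surface, becomes on $m$ strands the element dragging the first strand once around all the others, represented by the full-twist word $\sigma_1\cdots\sigma_{m-1}^2\cdots\sigma_2\sigma_1$. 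A final round of Tietze transformations eliminates the redundant pure-braid generators attached to strands $2,\dots,m$, each being an explicit $\sigma$-conjugate of some $a_r$ or $b_r$, leaving the economical generating set of Theorem~\ref{pres1}.

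The main obstacle will be twofold. First, one must compute the conjugation action of $\sigma_1$ on the free generators $a_r,b_r$ and check that it is captured \emph{exactly} by $(R_2)$--$(R_4)$; this is the delicate geometric calculation, since $\sigma_1$ drags the handle loops of the first strand across the second strand, and it is where the specific asymmetric shapes of $(R_3)$ and $(R_4)$ must be verified rather than guessed. Second, and more essentially, one must establish \emph{completeness} -- that the listed relations suffice, not merely that they hold. The extension-presentation method supplies this, provided the free kernel is given by an actual basis and the action is computed correctly; but one still has to confirm that the Tietze reduction to the stated generators creates no hidden consequences, and that the form of $(SR)$ is preserved at every inductive stage, so that its right-hand side remains the boundary of a disk enclosing all strands. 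Controlling this last point across the induction is the subtle bookkeeping on which the argument turns.
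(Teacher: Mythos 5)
You should first note the ground truth here: the paper contains no proof of Theorem \ref{pres1} at all. It is imported verbatim from Bellingeri \cite{paolo}, and the only related machinery the paper itself exercises is the presentation-of-extensions technique from \cite{johnsonpresentation}, applied later (Theorem \ref{pres3}) to build $B_{n,m}(S_g)$ \emph{on top of} this presentation. So your attempt can only be judged against the literature, not against anything internal to the paper. Measured that way, your strategy is sound and is essentially the classical one: Bellingeri's own argument is an induction along Fadell--Neuwirth-type exact sequences using exactly the extension-presentation method you invoke, while your specific variant --- present $P_m(S_g)$ first via the sequence with free kernel $\pi_1(S_g\setminus\{x_1,\dots,x_{m-1}\})$ of rank $2g+m-2$, then recover $B_m(S_g)$ from $1\to P_m(S_g)\to B_m(S_g)\to S_m\to 1$ --- is the route used in Scott's and Gonz\'alez-Meneses's treatments of surface braid presentations. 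Your base case $m=1$ is correct, your observation that $g\geq 1$ keeps all kernels free (so no $S^2$ or $\mathbb{R}P^2$ low-index exceptions arise) identifies the right reason the induction runs uniformly, and your geometric reading of $(SR)$ --- the polygon boundary $\prod_r[a_r,b_r^{-1}]$ becoming the loop of the first strand around all the others, i.e.\ $\sigma_1\cdots\sigma_{m-1}^2\cdots\sigma_2\sigma_1$ --- is exactly right.

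That said, as written this is a roadmap rather than a proof, and the two places you yourself flag as delicate are the entire mathematical content. First, in the step $1\to P_m(S_g)\to B_m(S_g)\to S_m\to 1$ the extension method requires more than the action of $\sigma_1$ on $a_r,b_r$: every relator of $S_m$ must be lifted to an explicit word in pure braid generators (each $\sigma_i^2$ lifts to a generator of type $A_{i,i+1}$, which must therefore be present in your kernel generating set before it can be Tietze-eliminated), and the conjugation action of every $\sigma_i$ on a full basis of $P_m(S_g)$ must be computed --- and a presentation of $P_m(S_g)$ is itself a nontrivial prerequisite that your sketch treats as a one-line application of the pure Fadell--Neuwirth sequence. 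Second, the relations $(R_2)$--$(R_4)$ are equivalent to, but not literally in, action form $\sigma_1 x\sigma_1^{-1}=w$, so an explicit rewriting is needed before one can conclude that the listed relations generate all consequences of the computed action; and completeness is only delivered once the elimination of the pure-braid generators attached to strands $2,\dots,m$ is carried through with the surface relation tracked at each stage so that it lands in the form $(SR)$. None of this is a wrong idea --- it is the skeleton of the published proof --- but the decisive computations are deferred, so the proposal should be regarded as a correct plan consistent with the cited source rather than a complete argument.
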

	
	A surface $S_g$ can be represented as a polygon of $4g$ edges with the standard identification of edges. The generators $\sigma_i$, for $1\leq i\leq m-1$, are the classical braid generators on the disc. The generator $a_r$, for $1\leq r\leq g$, corresponds to the braid whose first strand goes through the edge $\alpha_r$ while the rest strands are trivial. Respectively, the generator $b_r$, for $1\leq r\leq g$, corresponds to the braid whose first strand goes through the edge $\beta_r$ while the rest strands are trivial. For this geometric representation of the generators see Figure \ref{fig1}.
		
	The aim of this section is to give a presentation of a certain quotient of the group $B_{n,m}(S_g)$.
	To do so, we will first give a presentation of $B_{n,m}(S_g)$ using the short exact sequence 
	$$1\rightarrow B_n(S_g\setminus\{x_1,\dots,x_m\})\rightarrow B_{n,m}(S_g)\xrightarrow{\bar{q}} B_m(S_g)\rightarrow 1,$$
	 and standard results about presentations of group extensions. The map $\bar{q}$ can be considered geometrically as forgetting the first $n$ strands of the braid. 	 
	 Thus, let us first state a presentation of $B_n(S_g\setminus\{x_1,\dots,x_m\})$.	
	
	\begin{theorem}[Bellingeri, \cite{paolo}]\label{pres2}
		Let $m, n\geq 1$. The following constitutes a presentation of $B_n(S_g\setminus\{x_1,\dots,x_m\})$.
	\\
	\\\textbf{Generators:} $\sigma_1, \dots, \sigma_{n-1}$, $a_1,\dots, a_g$, $b_1,\dots, b_g$, $z_1,\dots, z_{m-1}$.
	\\\textbf{Relations:} 
		$(BR)$, $(R_1)$, $(R_2)$, $(R_3)$, $(R_4)$ as in Theorem \ref{pres1},
		\begin{enumerate}
		\item[$(R_5)$] $z_j\sigma_i=\sigma_iz_j$, for $i\neq 1$, $1\leq j\leq m-1$,
		\item[$(R_6)$] $\sigma_1^{-1}z_j\sigma_1a_r=
		a_r\sigma_1^{-1}z_j\sigma_1$, for $1\leq r\leq g$, $1\leq j\leq m-1$,
		\item[] $\sigma_1^{-1}z_j\sigma_1b_r=
		b_r\sigma_1^{-1}z_j\sigma_1$, for $1\leq r\leq g$, $1\leq j\leq m-1$,
		\item[$(R_7)$] $\sigma_1^{-1}z_j\sigma_1z_k=
		z_k\sigma_1^{-1}z_j\sigma_1$, for $1\leq j\leq m-1$, $j<k$,    
		\item[$(R_8)$] $\sigma_1^{-1}z_j\sigma_1^{-1}z_j=
		z_j\sigma_1^{-1}z_j\sigma_1^{-1}$, for $1\leq j\leq m-1$.		
	\end{enumerate}	
	\end{theorem}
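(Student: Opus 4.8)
The plan is to prove the presentation by induction on the number of strands $n$, with the closed-surface presentation of Theorem \ref{pres1} and the Fadell--Neuwirth fibration as the two main inputs. Throughout write $N_{g,m} := S_g\setminus\{x_1,\dots,x_m\}$ for the $m$-punctured genus-$g$ surface. Two features of the target presentation guide the whole argument: every surface and puncture generator ($a_r$, $b_r$, $z_j$) is supported on the \emph{first} strand, so that the corresponding motions of the remaining strands are recovered as $\sigma$-conjugates; and, in contrast to Theorem \ref{pres1}, there is \emph{no} surface relation $(SR)$.

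The base case is $n=1$, where $B_1(N_{g,m})=\pi_1(N_{g,m})$. Since $N_{g,m}$ is an open surface it is homotopy equivalent to a wedge of circles, so $\pi_1(N_{g,m})$ is free of rank $2g+m-1$, which is exactly the number of generators $a_1,\dots,a_g,b_1,\dots,b_g,z_1,\dots,z_{m-1}$. For $n=1$ there are no generators $\sigma_i$, and every relation in $(BR)$ and $(R_1)$--$(R_8)$ mentions some $\sigma_i$; hence all relations are vacuous and the proposed presentation collapses to the free group on these $2g+m-1$ generators. The absence of $(SR)$ is precisely the statement that, on an open surface, the surface-type relation $\prod_r [a_r,b_r^{-1}] = (\text{product of loops around the punctures})\cdot(\text{full twist})$ is solved for the loop around the last puncture rather than imposed, which is why only $z_1,\dots,z_{m-1}$ survive and no relation remains. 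This settles the base case.

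For the inductive step, assume the presentation for $B_{n-1}(N_{g,m})$. The engine is the Fadell--Neuwirth fibration $F_n(N_{g,m})\to F_{n-1}(N_{g,m})$ whose fibre is $N_{g,m}$ with $n-1$ further points removed, an open surface with free fundamental group. Passing to the mixed/unordered quotients --- the punctured-surface analogue of the generalised Fadell--Neuwirth sequence \eqref{s2}, with the single new strand playing the role of the distinguished block --- and using that this fibration admits a section when the surface is open, I would obtain a split extension expressing the new group in terms of $B_{n-1}(N_{g,m})$ and the free fundamental group of the fibre. Applying the standard method for presenting an extension, with a Reidemeister--Schreier computation to extract the kernel, then produces generators and relations for $B_n(N_{g,m})$: the new strand contributes the braid generator $\sigma_{n-1}$ and the relations $(BR)$; the fibre generators are identified through the section with $a_r,b_r,z_j$ and their $\sigma$-conjugates, so that $(R_1)$ records commutation of surface generators with $\sigma_i$ for $i\neq 1$, $(R_2)$--$(R_4)$ the handle relations, and $(R_5)$--$(R_8)$ the corresponding relations governing how the puncture generators $z_j$ commute past $\sigma_i$ for $i\neq 1$, entangle with the handle generators $a_r,b_r$, and interact among themselves.

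The main obstacle will be twofold. First, because $B_n$ is the \emph{full} (unordered) braid group there is no forgetful map $B_n(N_{g,m})\to B_{n-1}(N_{g,m})$, so the extension must be set up through the marked/mixed configuration space, and one must carefully track the conjugation action of the new braid generator $\sigma_{n-1}$ on the fibre generators; getting these conjugation formulas right is what determines the precise shape of $(R_5)$--$(R_8)$. Second, one has to show that the listed relations are both sufficient (every relation produced by the extension method is a consequence of those listed, after Tietze simplification) and non-redundant, and in particular that passing from the closed surface of Theorem \ref{pres1} to the punctured surface removes $(SR)$ while introducing exactly the puncture generators $z_j$ subject only to $(R_5)$--$(R_8)$. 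Verifying this completeness, together with the correct interaction between the $z_j$ and the handle generators, is the technical heart of the argument.
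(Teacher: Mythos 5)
The first thing to say is that this paper contains no proof of the statement at all: Theorem \ref{pres2} is quoted from Bellingeri \cite{paolo}, so your proposal can only be measured against the published argument, not against anything internal to the paper. Measured that way, your inductive step has a genuine structural gap. You correctly observe that there is no forgetful map on unordered configurations, and you route the argument through the mixed configuration space; but the fibration you then obtain, forgetting the one distinguished point of $UF_{n-1,1}(N_{g,m})\to UF_{n-1}(N_{g,m})$, realises as its total group the \emph{mixed} braid group $B_{n-1,1}(N_{g,m})$, which is a subgroup of $B_n(N_{g,m})$ of index $n$ and is not normal in it. The extension method you invoke (the same one, from \cite{johnsonpresentation}, that this paper uses for Theorem \ref{pres3}) presents the \emph{middle} term of a short exact sequence, so it hands you a presentation of $B_{n-1,1}(N_{g,m})$, not of $B_n(N_{g,m})$; and Reidemeister--Schreier runs in the wrong direction for your purposes, producing a presentation of a finite-index subgroup from the ambient group, never the reverse. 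Nothing in your sketch bridges from the index-$n$ subgroup back up to $B_n(N_{g,m})$, so the induction does not close. The standard repairs are either to run the induction on the pure braid groups via the pure Fadell--Neuwirth sequences \eqref{s1} and then present the full group from the genuine short exact sequence $1\to P_n(N_{g,m})\to B_n(N_{g,m})\to S_n\to 1$, where the kernel \emph{is} normal, or to follow Bellingeri's route: check geometrically that the listed relations hold in $B_n(N_{g,m})$, deduce a surjection from the abstractly presented group, and prove injectivity by comparing the two groups along compatible exact sequences.

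A secondary point: even granting the structure, your last paragraph concedes the actual content of the theorem. You never derive the conjugation formulas that pin down $(R_5)$--$(R_8)$, nor verify that the relations produced by the extension method reduce, after Tietze moves, to exactly the listed ones --- you only announce that this is ``the technical heart.'' The base case is fine ($B_1(N_{g,m})=\pi_1(N_{g,m})$ is free of rank $2g+m-1$ and every listed relation involves some $\sigma_i$, hence is vacuous), and your explanation for the disappearance of $(SR)$ on an open surface is the right heuristic; but as it stands the proposal is a strategy outline with a broken inductive engine, not a proof.
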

	
	The generators $\sigma_i$, for $1\leq i\leq n-1$, $a_r$ and $b_r$, for $1\leq r\leq g$, are the same as in Theorem \ref{pres1}. The generator $z_i$, for $1\leq i\leq m-1$, corresponds to the braid whose first strand wraps around the $i^{th}$ puncture while the rest strands are trivial. For this geometric representation of the generators see Figure \ref{fig1}.
	
	\begin{remark}
Note that a braid whose first strand wraps around the $m^{th}$ puncture with the rest strands trivial can be represented by the braid corresponding to the element $$z_m^{-1}=[a_1, b_1^{-1}]\cdots[a_g, b_g^{-1}]\sigma_1^{-1}\sigma_2^{-1}\cdots\sigma_{n-1}^{-2}\cdots \sigma_2^{-1}\sigma_1^{-1}z_1\cdots z_{m-1}.$$
	\end{remark}

For simplicity let us denote by $\beta_{n,m}$ the group $B_n(S_g\setminus\{x_1,\dots,x_m\})$ and by $\Gamma_2(\beta_{n,m})$ the commutator subgroup $[\beta_{n,m}, \beta_{n,m}]$ of $\beta_{n,m}$. From Theorem \ref{pres2} we can deduce a presentation of $\beta_{n,m}/\Gamma_2(\beta_{n,m})$, which we will use Section \ref{S3}.
	
	\begin{corollary}\label{quot1}
		For $n,m\geq 1$, the following constitutes a presentation of $\beta_{n,m}/\Gamma_2(\beta_{n,m})$.
		\textbf{Generators}: $\sigma$, $a_1,\dots ,a_g$, $b_1,\dots ,b_g$, $z_1, \dots, z_{m}$.
		\\ \textbf{Relations}:\begin{enumerate}
			\item[$(1)$] All generators commute pairwise,
			\item[$(2)$] $\sigma^2 = 1$,
			\item[$(3)$] $z_m=(z_1\cdots z_{m-1})^{-1}$.
		\end{enumerate}
		In particular, $$\beta_{n,m}/\Gamma_2(\beta_{n,m})\cong\mathbb{Z}^{2g+(m-1)} \times \mathbb{Z}_2,$$ where $a_1,\dots ,a_g$, $b_1,\dots ,b_g$, $z_1, \dots, z_{m-1}$ generate the $\mathbb{Z}^{2g+(m-1)}$-component and $\sigma$ the $\mathbb{Z}_2$-component.
	\end{corollary}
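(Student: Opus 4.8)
The plan is to use that $\Gamma_2(\beta_{n,m})$ is by definition the commutator subgroup $[\beta_{n,m},\beta_{n,m}]$, so that $\beta_{n,m}/\Gamma_2(\beta_{n,m})$ is exactly the abelianisation of $\beta_{n,m}$. I would therefore start from the presentation of $\beta_{n,m}$ in Theorem \ref{pres2} and apply the standard recipe for abelianising a finitely presented group: force all generators to commute pairwise and then rewrite each defining relation additively, keeping only those that do not become vacuous. This immediately yields relation $(1)$, and reduces the whole problem to identifying which of $(BR)$ and $(R_1)$--$(R_8)$ impose genuine constraints.

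First I would dispose of the braid relations $(BR)$. The far-commutativity relations $\sigma_i\sigma_j=\sigma_j\sigma_i$ for $|i-j|>1$ are vacuous once everything commutes, whereas each Artin relation $\sigma_i\sigma_{i+1}\sigma_i=\sigma_{i+1}\sigma_i\sigma_{i+1}$ abelianises to $\sigma_i=\sigma_{i+1}$, so all the $\sigma_i$ collapse to a single class $\sigma$. Running through $(R_1)$, $(R_2)$, $(R_3)$, $(R_5)$, $(R_6)$, $(R_7)$ and $(R_8)$, each is a commutation or conjugation relation whose two sides carry the same exponent sum in every generator, hence each becomes trivial. The sole survivor is $(R_4)$: reading $\sigma_1^{-1}a_r\sigma_1^{-1}b_r=b_r\sigma_1^{-1}a_r\sigma_1$ additively gives $-2\sigma+a_r+b_r=a_r+b_r$, i.e. $2\sigma=0$, which is relation $(2)$, namely $\sigma^2=1$.

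Next I would introduce the extra generator $z_m$ via the Remark following Theorem \ref{pres2}, which sets $z_m^{-1}=[a_1,b_1^{-1}]\cdots[a_g,b_g^{-1}]\,\sigma_1^{-1}\sigma_2^{-1}\cdots\sigma_{n-1}^{-2}\cdots\sigma_2^{-1}\sigma_1^{-1}\,z_1\cdots z_{m-1}$. In the abelianisation every commutator $[a_r,b_r^{-1}]$ vanishes, and the middle word $\sigma_1^{-1}\sigma_2^{-1}\cdots\sigma_{n-1}^{-2}\cdots\sigma_2^{-1}\sigma_1^{-1}$ has total $\sigma$-exponent $-2(n-1)$, so it too vanishes once $2\sigma=0$. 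What is left is $z_m^{-1}=z_1\cdots z_{m-1}$, which is relation $(3)$.

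It then remains only to read off the group. The surviving generators $a_1,\dots,a_g,b_1,\dots,b_g,z_1,\dots,z_{m-1}$ are subject to no relation at all, so they span a free abelian group $\mathbb{Z}^{2g+(m-1)}$, while $\sigma$ contributes a $\mathbb{Z}_2$ factor and $z_m$ is redundant by $(3)$; this gives $\beta_{n,m}/\Gamma_2(\beta_{n,m})\cong\mathbb{Z}^{2g+(m-1)}\times\mathbb{Z}_2$. The step requiring the most care is precisely the assertion that no hidden relation links the $a_r,b_r,z_j$. This is where it is essential that $\beta_{n,m}$ is the braid group of a \emph{punctured} surface: the presentation in Theorem \ref{pres2} contains no analogue of the surface relation $(SR)$ appearing in Theorem \ref{pres1}, so after abelianisation nothing ties the $a_r$ and $b_r$ together, and the $2g+(m-1)$ generators remain genuinely independent.
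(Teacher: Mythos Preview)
Your argument is correct and follows essentially the same route as the paper: the paper only sketches the proof in the Remark following Corollary~\ref{quot1}, noting that the $\sigma_i$ collapse via $(BR)$, that $(R_4)$ yields $\sigma^2=1$, and that $z_m$ is added as a redundant generator with relation $(3)$. Your write-up is simply a more detailed execution of this same strategy, and your explicit check that $(R_1)$--$(R_3)$ and $(R_5)$--$(R_8)$ become vacuous, together with your observation that the absence of a surface relation $(SR)$ in the punctured case is what guarantees the freeness of the $\mathbb{Z}^{2g+(m-1)}$ factor, makes the argument complete.
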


\begin{remark}
	For simplicity, in Corollary \ref{quot1} we denote by $\sigma$, $a_1,\dots, a_g$, $b_1,\dots, b_g$ and $z_1,\dots,z_m$ the coset representatives of the generators $\sigma_1,\dots,\sigma_{n-1}$, $a_1,\dots, a_g$, $b_1,\dots, b_g$ and $z_1,\dots,z_m$ of $\beta_{n,m}$, given by Theorem \ref{pres2}, in $\beta_{n,m}/\Gamma_2(\beta_{n,m})$. From relation $(BR)$ of Theorem \ref{pres1}, we get that all generators $\sigma_1,\dots,\sigma_{m-1}$ are mapped to the same element in $\beta_{n,m}/\Gamma_2(\beta_{n,m})$, which we denote by $\sigma$. From relation $(R_4)$ of Theorem \ref{pres1}, it follows that $\sigma^2=1$ in $\beta_{n,m}/\Gamma_2(\beta_{n,m})$. Moreover, note that in the generating set we have added the generator $z_m$, as we will use it in what follows, and at the same time we added $z_m=(z_1\cdots z_{m-1})^{-1}$ in the set of relations.
\end{remark}   	
	
	We give now a presentation of the group $B_{n,m}(S_g)$.
	
	\begin{theorem}\label{pres3}
		For $n,m\geq 1$, the following constitutes a presentation of $B_{n,m}(S_g)$.
		\\
		\\\textbf{Generators:} $\sigma_1,\dots,\sigma_{n-1},$
		$ a_1,\dots, a_g$, $b_1,\dots,b_g$, $z_1,\dots, z_{m-1},$\\
	    $\tau_1,\dots,\tau_{m-1}$, $c_1,\dots,c_g$, $d_1,\dots, d_g$.
		\\
		\\\textbf{Relations:} \begin{enumerate}	
			\item[$(I)$] Relations $(BR)$, $(R_1)$, $(R_2)$, $(R_3)$, $(R_4)$, $(R_5)$, $(R_6)$, $(R_7)$, $(R_8)$ of Theorem \ref{pres2}.
			\item[$(II)$] For $1\leq i,j \leq m-1$, $1\leq r,s\leq g$, \begin{enumerate}
			\item[$(\overline{BR})$] $\tau_i\tau_j=\tau_i\tau_j,$ for $|i-j|>1$, \\
			$\tau_i\tau_{i+1}\tau_i=\tau_{i+1}\tau_i\tau_{i+1},$ for $1\leq i < m-2,$
			\item[$(\overline{R_1})$] $c_r\tau_{i}=\tau_{i}c_r$, for $i\neq 1$,\\
			        $d_r\tau_{i}=\tau_{i}d_r$, for $i\neq 1$,
			\item[$(\overline{R_2})$] $\tau_1^{-1}c_r\tau_{1}^{-1}c_r=c_r\tau_1^{-1}c_r\tau_{1}^{-1}$,\\
			$\tau_1^{-1}d_r\tau_{1}^{-1}d_r=d_r\tau_1^{-1}d_r\tau_{1}^{-1}$,
			\item[$(\overline{R_3})$] $\tau_1^{-1}c_s\tau_1c_r=
			c_r\tau_1^{-1}c_s\tau_1$, for $s < r$,\\
			$\tau_1^{-1}c_s\tau_1d_r=
			d_r\tau_1^{-1}c_s\tau_1$, for $s < r$,\\       
	  	    $\tau_1^{-1}d_s\tau_1d_r=
		    d_r\tau_1^{-1}d_s\tau_1$, for $s < r$,\\      
			$\tau_1^{-1}d_s\tau_1c_r=
			c_r\tau_1^{-1}d_s\tau_1$, for $s < r$,
			\item[$(\overline{R_4})$] $\tau_1^{-1}c_r\tau_{1}^{-1}d_r=d_r\tau_1^{-1}c_r\tau_{1}$,
             \item[$(\overline{SR})$] $[c_1,d_1^{-1}]\cdots[c_g, d_g^{-1}]\tau_{1}^{-1}\tau_2^{-1}\cdots\tau_{m-1}^{-2}\cdots\tau_2^{-1}
             \tau_1^{-1}=\prod_{i=n-1}^{0}(\Sigma_iz_1^{-1}\Sigma_i^{-1})$,\\ 
             for $\Sigma_i=\sigma_i^{-1}\cdots\sigma_1^{-1}z_1\cdots z_m$ and $\Sigma_0=z_1\cdots z_m$,\\
             where $z_m=[a_1,b_1^{-1}]\cdots[a_g,b_g^{-1}]\sigma_1^{-1}\sigma_2^{-1}\cdots\sigma_{n-1}^{-2}\cdots\sigma_2^{-1}\sigma_1^{-1}z_1\cdots z_{m-1}$.
			                    \end{enumerate}
			\item[$(III)$] For $1\leq i \leq n-1$, $1\leq j, k\leq m-1$, $1\leq r,s\leq g$,
			\begin{enumerate}
			\item[$(a)$] $\tau_j\sigma_i\tau_j^{-1}=\sigma_i$,\\
				         $c_r\sigma_ic_r^{-1}=\sigma_i$,\\
				         $d_r\sigma_id_r^{-1}=\sigma_i$,
			\item[$(b)$] $\tau_ja_s\tau_j^{-1}=a_s$,
			\item []	\begin{equation*}
					c_ra_sc_r^{-1}=
					\begin{cases}
						(a_s^{-1}z_1^{-1})a_s(a_s^{-1}z_1^{-1})^{-1}, & \text{for}\ r=s,\\
						a_s, & \text{for} \ r<s,\\
						(a_r^{-1}z_1^{-1}a_rz_1)a_s(a_r^{-1}z_1^{-1}a_rz_1)^{-1}, & \text{for} \ r>s,
					\end{cases}
				\end{equation*}
		    \item[]\begin{equation*}
		    	d_ra_sd_r^{-1}=
		    	\begin{cases}
		    		(b_s^{-1}z_1^{-1}b_s)z_1a_s(b_s^{-1}z_1^{-1}b_s)^{-1}, & \text{for}\ r=s,\\
		    		a_s, & \text{for} \ r<s,\\
		    		(b_r^{-1}z_1^{-1}b_rz_1)a_s(b_r^{-1}z_1^{-1}b_rz_1)^{-1}, & \text{for} \ r>s,
		    	\end{cases}
		    \end{equation*}
			\item[$(c)$] $\tau_jb_s\tau_j^{-1}=b_s$,
			\item []	\begin{equation*}
				c_rb_sc_r^{-1}=
				\begin{cases}
					a_s^{-1}z_1^{-1}a_sb_s, & \text{for}\ r=s,\\
					b_s, & \text{for} \ r<s,\\
					(a_r^{-1}z_1^{-1}a_rz_1)b_s(a_r^{-1}z_1^{-1}a_rz_1)^{-1}, & \text{for} \ r>s,
				\end{cases}
			\end{equation*}
			\item[]\begin{equation*}
				d_rb_sd_r^{-1}=
				\begin{cases}
					(b_s^{-1}z_1^{-1})b_s(b_s^{-1}z_1^{-1})^{-1}, & \text{for}\ r=s,\\
					b_s, & \text{for} \ r<s,\\
					(b_r^{-1}z_1^{-1}b_rz_1)b_s(b_r^{-1}z_1^{-1}b_rz_1)^{-1}, & \text{for} \ r>s,
				\end{cases}
			\end{equation*}
			\item[$(d)$] \begin{equation*}
				\tau_jz_k\tau_j^{-1}=
			  	\begin{cases}
					z_k, & \text{for}\ j\neq k-1, k,\\
					z_{k-1}, & \text{for} \ j=k-1,\\
					z_kz_{k+1}z_k^{-1}, & \text{for} \ j=k,
				\end{cases}
			\end{equation*}
		\item[]	\begin{equation*}
				c_rz_kc_r^{-1}=
				\begin{cases}
					z_k, & \text{for}\ k\neq1,\\
					a_rz_ka_r^{-1}, & \text{for} \ k=1,
				\end{cases}
			\end{equation*}
		\item[]	\begin{equation*}
				d_rz_kd_r^{-1}=
				\begin{cases}
					z_k, & \text{for}\ k\neq 1,\\
					b_r^{-1}z_kb_r, & \text{for} \ k=1.
				\end{cases}
			\end{equation*}
			\end{enumerate}
		\end{enumerate}
		
	\end{theorem}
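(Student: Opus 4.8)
The plan is to apply the standard method for presenting a group extension to the short exact sequence
$$1\rightarrow \beta_{n,m}\rightarrow B_{n,m}(S_g)\xrightarrow{\bar{q}} B_m(S_g)\rightarrow 1,$$
where $\beta_{n,m}=B_n(S_g\setminus\{x_1,\dots,x_m\})$. Recall that if $1\to K\to G\to Q\to 1$ is exact and one is given presentations $K=\langle X_K\mid R_K\rangle$ and $Q=\langle X_Q\mid R_Q\rangle$, then $G$ is presented by the generating set $X_K\cup\widetilde{X_Q}$, where $\widetilde{X_Q}$ is a choice of lift in $G$ of each generator of $X_Q$, together with three families of relations: $(I)$ the relations $R_K$; $(II)$ for each $r\in R_Q$, the relation obtained by rewriting its lift $\tilde r$, which lies in $K$, as a word in $X_K$; and $(III)$ for each lift $\tilde x\in\widetilde{X_Q}$ and each $k\in X_K$, the relation obtained by rewriting the conjugate $\tilde x\,k\,\tilde x^{-1}\in K$ as a word in $X_K$. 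I would take the presentation of $K=\beta_{n,m}$ from Theorem \ref{pres2} and the presentation of $Q=B_m(S_g)$ from Theorem \ref{pres1}.

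For the lifts I would use geometric braids. Since $\bar q$ forgets the first $n$ strands, I take $\tau_1,\dots,\tau_{m-1}$ to be the classical braid generators among the last $m$ strands, and $c_1,\dots,c_g$, $d_1,\dots,d_g$ to be the braids in which the $(n+1)$-st strand traverses the edges $\alpha_r$, $\beta_r$ of the $4g$-gon while all other strands stay fixed; each of these maps under $\bar q$ to the corresponding generator $\sigma_i$, $a_r$, $b_r$ of $B_m(S_g)$. With these choices family $(I)$ is exactly the relation set of Theorem \ref{pres2}, which is what is recorded in part $(I)$ of the statement.

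Families $(II)$ and $(III)$ then require explicit geometric computation. For $(II)$, the braid-, commutation- and handle-relations $(BR)$, $(R_1)$--$(R_4)$ of $B_m(S_g)$ lift to local moves supported, up to isotopy, away from the first $n$ strands, so each such lift is already trivial in $K$; this yields $(\overline{BR})$, $(\overline{R_1})$--$(\overline{R_4})$ verbatim in $\tau_i,c_r,d_r$. The surface relation $(SR)$ is the one case whose lift is a \emph{nontrivial} element of $K$: the lift of $[a_1,b_1^{-1}]\cdots[a_g,b_g^{-1}]\sigma_1^{-1}\cdots\sigma_{m-1}^{-2}\cdots\sigma_1^{-1}$ is the braid in which the $(n+1)$-st strand bounds the defining polygon and hence encircles all the remaining strands and punctures; rewriting this boundary braid in the generators of $\beta_{n,m}$ produces the word $\prod_{i=n-1}^{0}\big(\Sigma_i z_1^{-1}\Sigma_i^{-1}\big)$ and gives $(\overline{SR})$. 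For $(III)$, I would compute each conjugate diagrammatically: conjugation of a first-block generator $\sigma_i$ by any of $\tau_j,c_r,d_r$ fixes it because the supports are disjoint, giving $(a)$; conjugation of $a_s,b_s$ or $z_k$ by $\tau_j$ likewise leaves them essentially fixed (up to the Artin-type permutation of the punctures in the $z_k$ case). The genuinely nontrivial cases are the conjugates of $a_s$, $b_s$ and $z_k$ by the handle lifts $c_r,d_r$; here dragging the $(n+1)$-st strand across handle $r$ pushes the first strand of the first block together with the puncture $x_1$, producing the corrective $z_1$-terms and conjugating factors appearing in the case-by-case formulas of $(b)$, $(c)$, $(d)$.

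The main obstacle is precisely the geometric bookkeeping in these two nontrivial computations: determining the word for the lifted surface relation $(\overline{SR})$, and deriving the conjugation formulas of $(b)$ and $(c)$. Tracking how a strand pushed around a genus-$g$ handle interacts with the other strand and with the punctures $x_1,\dots,x_m$ is where all the corrective $z_1$-terms and the conjugating factors such as $a_r^{-1}z_1^{-1}a_rz_1$ arise. Once these words are verified on explicit polygon diagrams, the extension-presentation theorem assembles $(I)$, $(II)$ and $(III)$ into the claimed presentation of $B_{n,m}(S_g)$.
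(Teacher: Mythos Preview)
Your proposal is correct and follows essentially the same approach as the paper: both apply the standard extension-presentation technique (as in Johnson) to the short exact sequence $1\to\beta_{n,m}\to B_{n,m}(S_g)\to B_m(S_g)\to 1$, choose the same geometric lifts $\tau_i,c_r,d_r$, and organize the relations into the same three families, with the explicit words in $(II)$ and $(III)$ read off from polygon diagrams. Your write-up is in fact slightly more explanatory than the paper's own proof, which simply asserts that the $(\overline{SR})$ relation and the conjugation formulas are ``obtained geometrically'' from Figure~\ref{fig1}.
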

	Note that the generators $\tau_1,\dots,\tau_{m-1}$, $c_1,\dots, c_g$ and $d_1,\dots, d_g$ of $B_{n,m}(S_g)$ correspond to the standard generators $\sigma_i, \dots, \sigma_{m-1}$, $a_1,\dots, a_g$ and $b_1,\dots, b_g$ as described in Theorem \ref{pres1}, but with base point the last $m$ points, see Figure \ref{fig1}.
	
\begin{figure}[!h]
	\centering
	\includegraphics[width=0.4\textwidth]{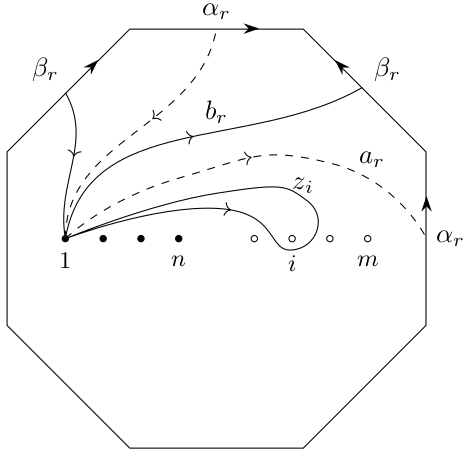}
		\includegraphics[width=0.4\textwidth]{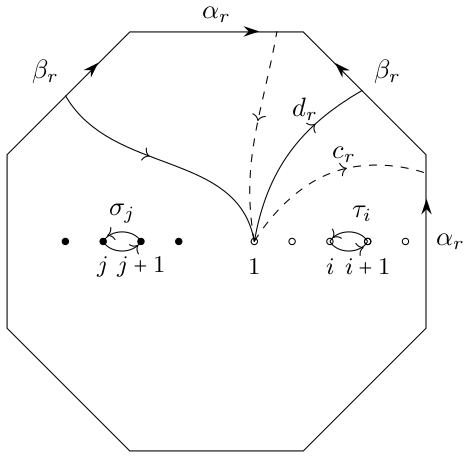}
	\caption{Geometric representation of the generators $a_r, b_r, z_i, \sigma_j, c_r, d_r$ and $ \tau_i $ of $B_{n,m}(S_g)$. Note that $a_r, b_r, z_i, \sigma_j$ are the generators that correspond to the first $n$ points while $c_r, d_r, \tau_i$ are the generators that correspond to the last $m$ points.}
	\label{fig1}
\end{figure}

	\begin{proof}
		In order to obtain a presentation of $B_{n,m}(S_g)$, we apply standard techniques for obtaining presentations of group extensions as described in [\cite{johnsonpresentation}, p. 139] based on the following short exact sequence:
		
		$$1\rightarrow \beta_{n,m}\rightarrow B_{n,m}(S_g)\xrightarrow{\bar{q}} B_m(S_g)\rightarrow 1,$$
		where the map $\bar{q}$ can be considered geometrically as the epimorphism that forgets the first $n$ strands.
		For simplicity, we denote by $\tau_1,\dots,\tau_{m-1}$, $c_1,\dots,c_g$, $d_1,\dots, d_g$ the corresponding coset representatives in $B_{n,m}(S_g)$. 
		The union of these elements together with the generators of $\beta_{n,m}$ of Theorem \ref{pres2} gives us the set of generators of $B_{n,m}(S_g)$.
		
		Based on [\cite{johnsonpresentation}, p. 139] we will obtain three classes of relations in $B_{n,m}(S_g)$. The first class of relations is the set of relations of $\ker(\bar{q})$, that is, of $\beta_{n,m}$, which are the relations in the class $(I)$ of the statement. 
		
		The second class of relations is obtained by rewriting the relations of $B_m(S_g)$, given in Theorem \ref{pres1}, in terms of the chosen coset representatives in $B_{n,m}(S_g)$ and expressing the corresponding elements as a word in $\beta_{n,m}$. Thus, we get relations $(\overline{BR}), (\overline{R_1}), (\overline{R_2}), (\overline{R_3}), (\overline{R_4})$ in the class $(II)$ of the statement. The relation $$[c_1, d_1^{-1}]\cdots[c_g, d_g^{-1}]=\tau_1\tau_2\cdots\tau_{m-1}^{2}\cdots\tau_2\tau_1$$ of $B_m(S_g)$ can be expressed as a word in $\beta_{n,m}$ as $$[c_1,d_1^{-1}]\cdots[c_g, d_g^{-1}]\tau_{1}^{-1}\tau_2^{-1}\cdots\tau_{m-1}^{-2}\cdots\tau_2^{-1}
		\tau_1^{-1}=\prod_{i=n-1}^{0}(\Sigma_iz_1^{-1}\Sigma_i^{-1}),\ \text{for}\ \Sigma_i=\sigma_i^{-1}\cdots\sigma_1^{-1}z_1\cdots z_m,$$
	 and $\Sigma_0=z_1\cdots z_m$, where $z_m=[a_1,b_1^{-1}]\cdots[a_g,b_g^{-1}]\sigma_1^{-1}\sigma_2^{-1}\cdots\sigma_{n-1}^{-2}\cdots\sigma_2^{-1}\sigma_1^{-1}z_1\cdots z_{m-1}$. 
		This relation is obtained geometrically using the geometric representation of the generators given in Figure \ref{fig1}, and it corresponds to relation $(\overline{SR})$ in the class $(II)$ of the statement.

		The third class of relations is derived from conjugating the generators of $\beta_{n,m}$ by the coset representatives $\tau_1,\dots,\tau_{m-1}$, $c_1,\dots,c_g$, $d_1,\dots, d_g$. The relations of this class are obtained geometrically. 
		\begin{itemize}
			\item By conjugating the generators $\sigma_i$, for $1\leq i\leq n-1$, we obtain relations $(III)(a)$ of the statement.
			\item By conjugating the generators $a_s$, for $1\leq s\leq g$, we have    
			relations $(III)(b)$ of the statement.
			\item By conjugating the generators $b_s$, for $1\leq s\leq g$, we have    
			relations $(III)(c)$ of the statement.
			\item By conjugating the generators $z_k$, for $1\leq k\leq m-1$, we have    
			relations $(III)(d)$ of the statement.
		\end{itemize}
			
		As a result, we have obtained a generating set and a complete set of relations, which coincide with those given in the statement, and this completes the proof.	
	\end{proof}
	
\section{A necessary condition for the splitting problem}\label{S3}
	In this section, we provide a necessary condition for the short exact sequence \eqref{ms} to split. Let $N$ be a normal subgroup of $B_{n,m}(S_g)$ that is also contained in $\beta_{n,m}$. Consider the following diagram of short exact sequences:
	\begin{equation}\label{cd}
		\begin{tikzcd}
			1 \ar[r] & \beta_{n,m} \ar[d,"pr|_{\beta_{n,m}}"] \ar[r] & B_{n,m}(S_g) \ar[d,"pr"] \ar[r, "\bar{q}"] & B_m(S_g) \ar[-,double line with arrow={-,-}]{d} \ar[r] & 1 \\
			1 \ar[r] & \beta_{n,m}/N \ar[r] & B_{n,m}(S_g)/N \ar[r, "q_*"] & B_m(S_g) \ar[r] & 1,
		\end{tikzcd}
	\end{equation}
	where $q_*:B_{n,m}(S_g)/N\to B_m(S_g)$ denotes the homomorphism induced by $\bar{q}$ and $pr$ is the canonical projection from $B_{n,m}(S_g)$ to $B_{n,m}(S_g)/N$.
	Suppose that the homomorphism $\bar{q}:B_{n,m}(S_g)\to B_m(S_g)$ admits a section $\bar{s}:B_m(S_g)\to B_{n,m}(S_g)$. Then $q_*:B_{n,m}(S_g)/N\to B_m(S_g)$ admits a section $s_*=pr\circ\bar{s}:B_m(S_g)\to B_{n,m}(S_g)/N$.
	For $N=\Gamma$, where $\Gamma:=\Gamma_2(\beta_{n,m})$ we obtain the following short exact sequence from \eqref{cd}:
	\begin{equation}\label{qfinal}
		\begin{tikzcd}
			1 \ar[r]& \beta_{n,m}/\Gamma \ar[r] & B_{n,m}(S_g)/\Gamma \ar[r, "q"] & B_m(S_g)\ar[r] & 1.
		\end{tikzcd}
	\end{equation}
	
	In order to study the sections of short exact sequence \eqref{qfinal} we need a presentation of the quotients $\beta_{n,m}/\Gamma$ and $B_{n,m}(S_g)/\Gamma$.
	We already  have a presentation of the Abelian group $\beta_{n,m}/\Gamma$, given by Corollary \ref{quot1}. We recall that $\beta_{n,m}/\Gamma\cong\mathbb{Z}^{2g+(m-1)} \times \mathbb{Z}_2$, where $a_1,\dots ,a_g$, $b_1,\dots ,b_g$, $z_1, \dots, z_{m-1}$ generate the $\mathbb{Z}^{2g+(m-1)}$-component and $\sigma$ the $\mathbb{Z}_2$-component. One can obtain a presentation of $B_{n,m}(S_g)/\Gamma$ using the presentation of $\beta_{n,m}/\Gamma$, given by Corollary \ref{quot1}, the presentation of $B_m(S_g)$ given in Theorem \ref{pres1}, and once again applying standard techniques for obtaining a presentation of group extensions as described in [\cite{johnsonpresentation}, p. 139] as presented in the proof of Theorem \ref{pres3}. However, since the group $\beta_{n,m}/\Gamma$ is the Abelianisation of $\beta_{n,m}$, from the commutative diagram of short exact sequences \eqref{cd}, a presentation of $B_{n,m}(S_g)/\Gamma$ may also be obtained straightforwardly. That is, one needs to consider the union of the generators of $\beta_{n,m}/\Gamma$ and the coset representatives of the generators of $B_n(S_g)$ as a set of generators, and the relations of $B_{n,m}(S_g)$, given in Theorem \ref{pres3}, projected into $B_{n,m}(S_g)/\Gamma$ as a set of relations, as described in the following proposition.

	\begin{proposition}\label{quot2}
	For $n, m\geq 1$, the following constitutes a presentation of $B_{n,m}(S_g)/\Gamma$, where $\Gamma=\Gamma_2(\beta_{n,m})$.
	\\
	\\ \textbf{Generators}: $\sigma$, $a_1, \dots, a_g$, $b_1, \dots, b_g$, $z_1,\dots,z_m$, $\tau_1,\dots ,\tau_{m-1}$, $c_1, \dots, c_g$, $d_1, \dots, d_g$.
	\\\textbf{Relations:} \begin{enumerate}	
		\item[$(I)$] Relations $(1)-(3)$ of Corollary \ref{quot1}.
		\item [$(II)$] \begin{enumerate}
			\item[$(a)$]Relations $(\overline{BR}), (\overline{R_1}), (\overline{R_2}), (\overline{R_3}), (\overline{R_4})$ of Theorem \ref{pres3}.
			\item[$(b)$] $[c_1,d_1^{-1}]\cdots[c_g,d_g^{-1}]\tau_1^{-1}\tau_2^{-1}\cdots\tau_{m-1}^{-1}\cdots\tau_2^{-1}\tau_1^{-1}=z_1^{-n}$.
		\end{enumerate}
		\item[$(III)$] For $1\leq i, k\leq m-1$, $1\leq r,s\leq g$,
		\begin{enumerate}
			\item[$(a)$] 
			$\tau_i, c_r, d_r$ pairwise commute with $\sigma$,\\
			$\tau_i, c_r$ pairwise commute with $a_s$,\\
			$\tau_i, d_r$ pairwise commute with $b_s$,\\
			$c_r, d_r$ pairwise commute with $z_k$,
			\item[$(b)$] 
			\begin{equation*}
				c_rb_sc_r^{-1}=
				\begin{cases}
					b_s, & \text{for}\ r\neq s,\\
					z_1^{-1}b_s, & \text{for} \ r=s,
				\end{cases}
			\end{equation*}	
			\item[$(c)$] 
			\begin{equation*}
				d_ra_sd_r^{-1}=
				\begin{cases}
					a_s, & \text{for}\ r\neq s,\\
					z_1a_s, & \text{for} \ r=s,
				\end{cases}
			\end{equation*}	
			\item[$(d)$] 
			\begin{equation*}
				\tau_iz_k\tau_i^{-1}=
				\begin{cases}
					z_k, & \text{for}\ i\neq k-1,k,\\
					z_{k-1}, & \text{for} \ i=k-1,\\
					z_{k+1}, & \text{for} \ i=k.
				\end{cases}
			\end{equation*}
		\end{enumerate}
	\end{enumerate}
	
\end{proposition}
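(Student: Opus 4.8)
The plan is to read off the presentation directly from the commutative diagram \eqref{cd}, exploiting the fact that $\Gamma=\Gamma_2(\beta_{n,m})$, being the commutator subgroup of $\beta_{n,m}$, is characteristic in $\beta_{n,m}$ and hence normal in $B_{n,m}(S_g)$, since $\beta_{n,m}=\ker\bar{q}$ is itself normal. Because $\Gamma\subseteq\beta_{n,m}$, the sequence \eqref{qfinal} is obtained from the top row of \eqref{cd} by abelianising only the kernel. The key structural observation I would record is that $\Gamma$ coincides with the normal closure in $B_{n,m}(S_g)$ of the commutators $[g,g']$ of the generators $g,g'$ of $\beta_{n,m}$: these commutators normally generate $[\beta_{n,m},\beta_{n,m}]$ inside $\beta_{n,m}$, so their normal closure in $B_{n,m}(S_g)$ contains $\Gamma$, while it is contained in $\Gamma$ because $\Gamma$ is normal in $B_{n,m}(S_g)$. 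Consequently a presentation of $B_{n,m}(S_g)/\Gamma$ is obtained from that of $B_{n,m}(S_g)$ in Theorem \ref{pres3} by adjoining the relations making the generators $\sigma_1,\dots,\sigma_{n-1},a_1,\dots,a_g,b_1,\dots,b_g,z_1,\dots,z_{m-1}$ of $\beta_{n,m}$ pairwise commute, and the remaining work is to simplify the three relation classes of Theorem \ref{pres3} under this abelianisation.

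For class $(I)$, the relations of $\beta_{n,m}$ together with the new commutativity relations collapse, by Corollary \ref{quot1}, to relations $(1)$--$(3)$: all $\sigma_i$ are identified with a single $\sigma$ satisfying $\sigma^2=1$, and $z_m=(z_1\cdots z_{m-1})^{-1}$. The relations $(\overline{BR}),(\overline{R_1}),\dots,(\overline{R_4})$ of class $(II)$ involve only the coset representatives $\tau_i,c_r,d_r$, which do not lie in $\beta_{n,m}$, so they survive unchanged and give $(II)(a)$. For the surface relation $(\overline{SR})$ I would observe that its left-hand side is again a word in $\tau_i,c_r,d_r$ alone and so is unaffected, whereas its right-hand side $\prod_{i=n-1}^{0}(\Sigma_i z_1^{-1}\Sigma_i^{-1})$ is a product of conjugates of $z_1^{-1}$ by the elements $\Sigma_i\in\beta_{n,m}$; since $\beta_{n,m}$ is now abelian each conjugate equals $z_1^{-1}$, and the product over the $n$ indices $i=n-1,\dots,0$ collapses to $z_1^{-n}$, yielding $(II)(b)$.

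For class $(III)$ I would treat the conjugation relations case by case, using repeatedly that $wxw^{-1}$ with $w,x\in\beta_{n,m}$ reduces to $x$ in the abelianisation. Relation $(III)(a)$ gives commutation of $\tau_j,c_r,d_r$ with $\sigma$; relation $(III)(b)$ gives $\tau_j a_s\tau_j^{-1}=a_s$, each displayed conjugate for $c_r a_s c_r^{-1}$ reduces to $a_s$, while $d_r a_s d_r^{-1}$ reduces to $a_s$ for $r\neq s$ and, for $r=s$, the word $(b_s^{-1}z_1^{-1}b_s)z_1 a_s(b_s^{-1}z_1^{-1}b_s)^{-1}$ loses its conjugating factors and leaves $z_1 a_s$, giving $(III)(c)$. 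Symmetrically, relation $(III)(c)$ of Theorem \ref{pres3} yields commutation of $\tau_j,d_r$ with $b_s$ and the single nontrivial relation $c_s b_s c_s^{-1}=z_1^{-1}b_s$ of $(III)(b)$, and relation $(III)(d)$ yields commutation of $c_r,d_r$ with every $z_k$ together with $\tau_i z_k\tau_i^{-1}=z_{k-1}$ for $i=k-1$ and $=z_{k+1}$ for $i=k$ (the word $z_k z_{k+1}z_k^{-1}$ abelianising to $z_{k+1}$), which is exactly $(III)(d)$. Collecting these reductions reproduces the stated presentation.

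The step I expect to require the most care is the structural justification at the start, namely that adjoining only the commutators of the $\beta_{n,m}$-generators really yields $B_{n,m}(S_g)/\Gamma$ and not some larger quotient; this rests on identifying $\Gamma$ with the normal closure of those commutators in the whole group $B_{n,m}(S_g)$, for which the normality of $\Gamma$ in $B_{n,m}(S_g)$ is essential. The subsequent simplifications are routine applications of abelianisation inside $\beta_{n,m}$, the only points demanding attention being the correct bookkeeping in the collapse of $(\overline{SR})$ to $z_1^{-n}$ (in particular counting the $n$ factors from $i=n-1$ down to $i=0$) and checking that the conjugating prefixes in the case-split relations of class $(III)$ genuinely cancel.
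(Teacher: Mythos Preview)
Your proposal is correct and follows essentially the same approach as the paper: the paper does not give a separate proof of this proposition but remarks just before its statement that the presentation is obtained ``straightforwardly'' by projecting the relations of Theorem~\ref{pres3} into $B_{n,m}(S_g)/\Gamma$ via the commutative diagram~\eqref{cd}. Your write-up is in fact more careful than the paper's, in that you justify explicitly why $\Gamma$ is normal in $B_{n,m}(S_g)$ and why adjoining the commutators of the $\beta_{n,m}$-generators kills exactly $\Gamma$; the subsequent case-by-case simplifications you outline match precisely the relations listed in the proposition.
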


\begin{remark}
	In Proposition \ref{quot2}, we denote by $\tau_1,\dots,\tau_{m-1}$, $c_1,\dots, c_g$ and $d_1,\dots, d_g$ the coset representatives of the generators $\sigma_1,\dots,\sigma_{m-1}$, $a_1,\dots, a_g$ and $b_1,\dots, b_g$ of $B_m(S_g)$, given by Theorem \ref{pres1}, in $B_{n,m}(S_g)/\Gamma$.
\end{remark}          

	Now, suppose that there exists a section $\bar{s} : B_m(S_g) \to B_{n,m}(S_g)$ for the homomorphism $\bar{q}: B_{n,m}(S_g) \to B_m(S_g)$. As we already saw, it follows that there exists a section $s_*:B_m(S_g) \to B_{n,m}(S_g)/\Gamma$ for $q_*:B_{n,m}(S_g)/\Gamma\to B_m(S_g)$. From Corollary \ref{quot1}, the set $\{a_1,\dots ,a_g, b_1,\dots, b_g, z_1,\dots,z_{m-1}, \sigma\}$ generates $\ker(q_*)$, which is the group $\beta_{n,m}/\Gamma\cong \mathbb{Z}^{2g+(m-1)}\times \mathbb{Z}_2$. This allows us to describe the image of the elements of the generating set of $B_m(S_g)$, under the section $s_*$, as follows:
	\begin{equation}\label{imageoft}
		s_*(\tau_i) =\tau_i\cdot a_1^{k_{i,1}}a_2^{k_{i,2}}\cdots a_g^{k_{i,g}}b_1^{l_{i,1}}b_2^{l_{i,2}}\cdots b_g^{l_{i,g}}
		z_1^{m_{i,1}}z_2^{m_{i,2}}\cdots z_{m-1}^{m_{i,{m-1}}}\sigma^{n_i},\ \text{for}\ 1\leq i \leq m-1,
	\end{equation}
	\begin{equation}\label{imageofc}
		s_*(c_r) =c_r\cdot a_1^{\bar{k}_{r,1}}a_2^{\bar{k}_{r,2}}\cdots a_g^{\bar{k}_{r,g}}b_1^{\bar{l}_{r,1}}b_2^{\bar{l}_{r,2}}\cdots b_g^{\bar{l}_{r,g}}
		z_1^{\bar{m}_{r,1}}z_2^{\bar{m}_{r,2}}\cdots z_{m-1}^{\bar{m}_{r,{m-1}}}\sigma^{\bar{n}_r},\ \text{for}\ 1\leq r \leq g,
	\end{equation} 
    \begin{equation}\label{imageofd}
	    s_*(d_r) =d_r\cdot a_1^{\tilde{k}_{r,1}}a_2^{\tilde{k}_{r,2}}\cdots a_g^{\tilde{k}_{r,g}}b_1^{\tilde{l}_{r,1}}b_2^{\tilde{l}_{r,2}}\cdots b_g^{\tilde{l}_{r,g}}
	z_1^{\tilde{m}_{r,1}}z_2^{\tilde{m}_{r,2}}\cdots z_{m-1}^{\tilde{m}_{r,{m-1}}}\sigma^{\tilde{n}_r},\ \text{for}\ 1\leq r \leq g,
    \end{equation} 
	where $k_{i,s}, l_{i,s}, m_{i,t}, \bar{k}_{r,s}, \bar{l}_{r,s}, \bar{m}_{r,t}, \tilde{k}_{r,s}, \tilde{l}_{r,s}, \tilde{m}_{r,t} \in \mathbb{Z}\ \text{and}\ n_i, \bar{n}_r, \tilde{n}_r\in\{0,1\}$, for every $1\leq i, t\leq m-1$ and $1\leq r, s\leq q$. Note that these integers are unique for every $s_*(\tau_i)$, $s_*(c_r)$ and $s_*(d_r)$, where $1\leq i \leq m-1$ and $1\leq r \leq g$.
	Under the assumption that there exists a section $s_*:B_m(S_g) \to B_{n,m}(S_g)/\Gamma$, the image of the relations, under $s_*$, in $B_m(S_g)$ are also relations in $B_{n,m}(S_g)/\Gamma$. In this way, we will obtain further information regarding the exponents in the formulas \eqref{imageoft}, \eqref{imageofc}, \eqref{imageofd}  and possible restrictions for the value of $n$, under the assumption that the short exact sequence \eqref{ms} splits. 
	
	Based on the presentation of $B_m(S_g)$ given by Theorem \ref{pres1}, we have the following six relations, which hold in $B_{n,m}(S_g)/\Gamma$:
	\begin{itemize}
			\item[\textbf{R1.}] $s_*(\tau_{i}\tau_j)=s_*(\tau_j\tau_i)$, for $|i-j|>1$,
		\item[] $s_*(\tau_i\tau_{i+1}\tau_i)=s_*(\tau_{i+1}\tau_i\tau_{i+1})$, for $1\leq i\leq m-2,$
		\item[\textbf{R2.}] $s_*(c_r\tau_{i})=s_*(\tau_{i}c_r)$, for $1\leq r\leq g$, $i\neq 1$,
		\item[] $s_*(d_r\tau_{i})=s_*(\tau_{i}d_r)$, for $1\leq r\leq g$, $i\neq 1$,
		\item[\textbf{R3.}] $s_*(\tau_1^{-1}c_r\tau_{1}^{-1}c_r)=s_*(c_r\tau_1^{-1}c_r\tau_{1}^{-1})$, for $1\leq r\leq g,$
		\item[] $s_*(\tau_1^{-1}d_r\tau_{1}^{-1}d_r)=s_*(d_r\tau_1^{-1}d_r\tau_{1}^{-1})$, for $1\leq r\leq g,$        
		\item[\textbf{R4.}] $s_*(\tau_1^{-1}c_s\tau_1c_r)=s_*(c_r\tau_1^{-1}c_s\tau_1)$, for $s < r,$
		\item[] $s_*(\tau_1^{-1}c_s\tau_1d_r)=s_*(d_r\tau_1^{-1}c_s\tau_1)$, for $s < r,$       
		\item[] $s_*(\tau_1^{-1}d_s\tau_1d_r)=s_*(d_r\tau_1^{-1}d_s\tau_1)$, for $s<r,$        
		\item[] $s_*(\tau_1^{-1}d_s\tau_1c_r)=s_*(c_r\tau_1^{-1}d_s\tau_1)$, for $s < r,$   
		\item[\textbf{R5.}] $s_*(\tau_1^{-1}c_r\tau_{1}^{-1}d_r)=s_*(d_r\tau_1^{-1}c_r\tau_{1})$, for $1\leq r\leq g$,
		\item[\textbf{R6.}] $s_*([c_1,d_1^{-1}]\cdots[c_g, d_g^{-1}])=s_*(\tau_{1}\tau_{2}\cdots\tau_{m-1}^2\cdots\tau_{2}\tau_{1})$.
	\end{itemize}
	In order to prove Theorem \ref{thm1}, we will make use of the following relations in $B_{n,m}(S_g)/\Gamma$ that appear in Proposition \ref{quot2}: \\
		For $1\leq i,k\leq m-1$ and $1\leq r, s\leq g$, we have the following set of relations :
	\begin{itemize}
		\item[$(S_1)$] $\sigma, a_1,\dots, a_g, b_1,\dots, b_g, z_1,\dots, z_m$ pairwise commute,
		\item[$(S_2)$] $\tau_i, c_r, d_r$ pairwise commute with $\sigma$,\\
		$\tau_i, c_r$ pairwise commute with $a_s$,\\
		$\tau_i, d_r$ pairwise commute with $b_s$,\\
		$c_r, d_r$ pairwise commute with $z_k$,
		\item[$(S_3)$] $\sigma^2=1$,
		\item[$(S_4)$] $z_m=(z_1\cdots z_{m-1})^{-1}$, 
		\item[$(S_5)$] $[c_1, d_1^{-1}]\cdots[c_g, d_g^{-1}]\tau_{1}^{-1}\tau_{2}^{-1}\cdots\tau_{m-1}^{-2}\cdots\tau_{2}^{-1}\tau_{1}^{-1}=z_1^{-n}$,
		\item[$(S_6)$] $b_sc_s=c_sb_sz_1$, 
		\item[$(S_7)$] $a_sd_s=d_sa_sz_1^{-1}$, 
		\item[$(S_8)$] $\tau_iz_{i+1}=z_i\tau_i$ and $\tau_iz_i=z_{i+1}\tau_i$.
	\end{itemize}
	\begin{proposition}\label{prop1}
		Let $g\geq2$, $n\geq 1$ and $m\geq 4$.
		If the following short exact sequence
		\begin{equation}\label{new}
			\begin{tikzcd}
				1 \ar[r]& B_n(S_g\setminus\{x_1,\dots,x_m\}) \ar[r] & B_{n,m}(S_g) \ar[r, "\bar{q}"] & B_m(S_g)\ar[r] & 1
			\end{tikzcd}
		\end{equation}
		splits, then $n=km + k(2g-2)$, where $k\in \mathbb{N}$.
	\end{proposition}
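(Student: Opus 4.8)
The plan is to use the commutative diagram \eqref{cd} with $N=\Gamma$: by the discussion preceding \eqref{qfinal}, a section $\bar s$ of $\bar q$ would induce a section $s_*$ of $q_*\colon B_{n,m}(S_g)/\Gamma\to B_m(S_g)$, so it suffices to prove that the mere existence of $s_*$ forces $n=k(m+2g-2)$. Since $\ker q_*=\beta_{n,m}/\Gamma$ is the abelian group of Corollary \ref{quot1}, the homomorphism $s_*$ is completely determined by the correction terms recorded in \eqref{imageoft}, \eqref{imageofc} and \eqref{imageofd}, and the whole argument reduces to imposing that $s_*$ send each defining relation of $B_m(S_g)$ from Theorem \ref{pres1}, i.e. the relations R1--R6, to a valid identity in $B_{n,m}(S_g)/\Gamma$. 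After moving every coset representative $\tau_i,c_r,d_r$ past the kernel generators by the conjugation rules $(S_1)$--$(S_8)$, each such identity becomes a system of linear equations in the integer exponents of \eqref{imageoft}--\eqref{imageofd} (and in $\{0,1\}$ for the $\sigma$-exponents), together with $n$.

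First I would treat R1--R5, none of which involves $n$, in order to understand the allowed correction terms. Two structural facts drive this. By $(S_8)$ each $\tau_i$ acts by conjugation as the transposition $z_i\leftrightarrow z_{i+1}$, so the images of $\tau_1,\dots,\tau_{m-1}$ realise the standard permutation action of $S_m$ on the puncture lattice $Z=\langle z_1,\dots,z_m\mid z_1\cdots z_m=1\rangle\cong\mathbb{Z}^{m-1}$; consequently the braid relations R1 become exactly the $1$-cocycle conditions for this action on the $z$-parts $\vec t_i$ of the corrections $T_i$. By $(S_6)$ and $(S_7)$, conjugation by $c_r$ twists $b_r$ by $z_1^{-1}$ and conjugation by $d_r$ twists $a_r$ by $z_1$, while both fix every $z_k$; substituting this into R2--R5 ties the handle corrections $A_r,D_r$ to the $\tau_1$-correction and shows that every $z$-twist originating from the handles lies in the $z_1$-direction.

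The heart of the proof is R6, the surface relation $\prod_{r=1}^{g}[c_r,d_r^{-1}]=\tau_1\cdots\tau_{m-1}^2\cdots\tau_1$. Denote its two sides by $W_L$ and $W_R$, and for a word $W$ let $\Delta(W)\in\beta_{n,m}/\Gamma$ be the abelian discrepancy between $s_*(W)$ and the bare word $\overline W$ in the coset representatives. Applying $s_*$ and invoking $(S_5)$, which records that the bare representatives satisfy $\overline{W_L}\,\overline{W_R}^{-1}=z_1^{-n}$, I would replace $\overline{W_L}$ by $z_1^{-n}\,\overline{W_R}$. The decisive point is that $W_R$ induces the trivial permutation of the punctures — its image in $S_m$ is $s_1\cdots s_{m-1}^2\cdots s_1=\mathrm{id}$, where $s_i=(i\,i{+}1)$ — so conjugation by $\overline{W_R}$ fixes $z_1$ and the whole relation collapses to a single identity in the abelian kernel $K=\beta_{n,m}/\Gamma$, of the form $n\,z_1=\Delta(W_L)-\Delta(W_R)$. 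Here the $z$-part of $\Delta(W_L)$ is the cumulative $z_1$-twist produced by the $g$ commutators $[c_r,d_r^{-1}]$ via $(S_6)$--$(S_7)$, while $\Delta(W_R)$ is the telescoping sum of $S_m$-permuted copies of the $\vec t_i$ prescribed by the cocycle rule applied to the word $W_R$.

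The main obstacle is extracting the divisibility from $n\,z_1=\Delta(W_L)-\Delta(W_R)$, for on its own this identity under-determines $n$: the congruence only surfaces once it is solved simultaneously with the cocycle constraints from R1 and the handle constraints from R2--R5. Two clean facts organise the elimination. On the puncture side, the $S_m$-coinvariants of the lattice $Z$ are $Z_{S_m}\cong\mathbb{Z}/m\mathbb{Z}$ (the sum-of-coordinates map modulo the relation $z_1\cdots z_m=1$), and the braid cocycle conditions force all the classes $[\vec t_i]$ to coincide there; this is what injects the summand $m$. On the handle side, the twists recorded by $(S_6)$--$(S_7)$ together with the closed-surface relator account for the complementary summand $2g-2=-\chi(S_g)$. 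Carrying the two through the reduced identity, the free parameters cancel and one is left with $n\equiv 0 \pmod{m+2g-2}$, that is $n=k(m+2g-2)=km+k(2g-2)$ for some $k\in\mathbb{N}$. I expect the genuinely delicate step to be exactly this coupled elimination — tracking the telescoping, $S_m$-permuted sum $\Delta(W_R)$ against the handle twists of $\Delta(W_L)$ — which is presumably also why the hypotheses $g\ge2$ and $m\ge4$ are imposed, guaranteeing that all index ranges occurring in R1--R6 are non-degenerate.
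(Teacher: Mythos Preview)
Your plan is the same as the paper's: pass to the quotient by $\Gamma$, write the section using the unknowns \eqref{imageoft}--\eqref{imageofd}, impose R1--R5 to constrain them, and then read off the divisibility from R6 together with $(S_5)$. The paper carries this out by brute force rather than in cocycle/coinvariant language, but the underlying mechanism is identical, including your observation that $\overline{W_R}$ has trivial image in $S_m$ and hence centralises $z_1$.

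Where your sketch becomes imprecise is in the final ``$m$ from coinvariants, $2g-2$ from handles'' heuristic. The coinvariant fact $Z_{S_m}\cong\mathbb{Z}/m\mathbb{Z}$ is true but is not what produces the divisibility: the equation $n\,z_1=\Delta(W_L)-\Delta(W_R)$ lives in $Z$ itself, not in $Z_{S_m}$, and passing to coinvariants would destroy exactly the information you need. In the paper's computation the two contributions come out as $(m-2)$ on the $\tau$-side and $2g$ on the handle side (same sum, different split), and the single step that makes them share the \emph{same} integer $k$ --- hence gives $n=k(m-2)+2kg$ rather than two unrelated congruences --- is R5, i.e.\ $s_*(\tau_1^{-1}c_r\tau_1^{-1}d_r)=s_*(d_r\tau_1^{-1}c_r\tau_1)$, which forces the $a_r$-exponent $\bar k_{r,r}$ in $s_*(c_r)$ (and the $b_r$-exponent $\tilde l_{r,r}$ in $s_*(d_r)$) to equal $-(m_{1,1}+m_{1,2})=:k$. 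Your outline mentions that R2--R5 ``tie'' the handle corrections to $\tau_1$, but does not isolate this coupling, and without it the elimination does not close; this is the one genuine gap you would need to fill to turn the plan into a proof.
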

	
	\begin{proof}
		Based on the above discussion and on the assumption that the short exact sequence \eqref{new} splits, we have that the short exact sequence \eqref{qfinal} also splits. Thus, we will examine the relations \textbf{R1-R6}, which hold in $B_{n,m}(\mathbb{R}P^2)/\Gamma$, from which we will deduce that $n$ is a multiple of $m+(2g-2)$.
		
		We start with the relation $s_*(\tau_i\tau_{i+1}\tau_i)=s_*(\tau_{i+1}\tau_i\tau_{i+1})$, for $1\leq i\leq m-2$, of \textbf{R1}. 
		To treat this equality, we will use the formula \eqref{imageoft} and the relation $(S_8)$. To simplify the calculations, in the expression of $s_*(\tau_i\tau_{i+1}\tau_i)$ and $s_*(\tau_{i+1}\tau_i\tau_{i+1})$, and in all the expressions that follow, we keep only the elements that do not commute with all the other elements. This is why we will use the symbol $\approx$ instead of $=$. We can indeed ignore all the elements that commute with all the other elements since in each equality we can directly compare their exponents, since they are not affected by any possible change of position.
		Let $1\leq i\leq m-3$. We will treat the case $i=m-2$ separately.
		\begin{flalign*}
			s_*(\tau_{i}\tau_{i+1}\tau_{i}) &\approx 
			\tau_iz_1^{m_{i,1}}\cdots z_{m-1}^{m_{i,{m-1}}}\tau_{i+1} z_1^{m_{i+1,1}}\cdots z_{m-1}^{m_{i+1,{m-1}}}
			\tau_iz_1^{m_{i,1}}\cdots z_{m-1}^{m_{i,{m-1}}}&\\ 
			&\approx 
			\tau_i\tau_{i+1}z_1^{m_{i,1}}\cdots z_{i+2}^{m_{i,{i+1}}}z_{i+1}^{m_{i,{i+2}}}\cdots z_{m-1}^{m_{i,{m-1}}}
			\tau_{i}z_1^{m_{i+1,1}}\cdots z_{i+1}^{m_{i+1,i}}z_{i}^{m_{i+1,{i+1}}}\cdots z_{m-1}^{m_{i+1,{m-1}}}&\\
			&\quad z_1^{m_{i,1}}\cdots z_{m-1}^{m_{i,{m-1}}}&\\
			&\approx\tau_i\tau_{i+1}\tau_{i}z_1^{m_{i,1}}\cdots z_{i+1}^{m_{i,i}}z_{i+2}^{m_{i,{i+1}}}z_i^{i,i+2}\cdots z_{m-1}^{m_{i,{m-1}}}&\\
			&\quad z_1^{m_{i+1,1}}\cdots z_{i+1}^{m_{i+1,i}}z_{i}^{m_{i+1,{i+1}}}z_{i+2}^{i+1,i+2}\cdots z_{m-1}^{m_{i+1,{m-1}}} 
			z_1^{m_{i,1}}\cdots z_{i}^{m_{i,i}}z_{i+1}^{m_{i,{i+1}}}z_{i+2}^{i,i+2}\cdots z_{m-1}^{m_{i,{m-1}}}&\\
			&\approx\tau_i\tau_{i+1}\tau_{i}z_1^{2m_{i,1}+m_{i+1,1}}\cdots z_{i}^{m_{i,i+2}+m_{i+1,i+1}+m_{i,i}}
			z_{i+1}^{m_{i,{i}}+m_{i+1,i}+m_{i,i+1}}&\\
			&\quad z_{i+2}^{m_{i,{i+1}}+m_{i+1,i+2}+m_{i,i+2}}\cdots z_{m-1}^{2m_{i,{m-1}}+m_{i+1,m-1}}
		\end{flalign*}
		and similarly
		\begin{flalign*}
			s_*(\tau_{i+1}\tau_i\tau_{i+1}) &\approx\tau_{i+1}\tau_{i}\tau_{i+1}z_1^{2m_{i+1,1}+m_{i,1}}\cdots z_{i}^{m_{i+1,i+1}+m_{i,i}+m_{i+1,i}}
			z_{i+1}^{m_{i+1,{i+2}}+m_{i,i+2}+m_{i+1,i+1}}&\\
			&\quad z_{i+2}^{m_{i+1,{i}}+m_{i,i+1}+m_{i+1,i+2}}\cdots z_{m-1}^{2m_{i+1,{m-1}}+m_{i,m-1}}.
		\end{flalign*}
		Comparing the exponents of the elements $z_i, z_{i+1}, z_{i+2}$ and $z_j$, for $j\neq i, i+1, i+2$, of $B_{n,m}(S_g)/\Gamma$ in these two equations, we obtain the following:
		\begin{equation}\label{one}
			m_{i,j}=m_{i+1,j},\ \text{for}\ 1\leq  i\leq m-3\ \text{and}\ j\neq i, i+1, i+2.
		\end{equation}
		\begin{equation}\label{two}
			m_{i,i}+m_{i,i+1}=m_{i+1,i+1}+m_{i+1,i+2},\ \text{for}\ 1\leq i \leq m-3.
		\end{equation}
	Moreover, from the equality $s_*(\tau_i\tau_{i+1}\tau_i)=s_*(\tau_{i+1}\tau_i\tau_{i+1})$, for $1\leq i\leq m-2$, and from $(S_1)$, $(S_2)$, that is, $a_1,\dots, a_g, b_1,\dots, b_g, \sigma$ commute with all the other elements in the equality, comparing the exponents of these elements, we get that:
		\begin{align}	\label{three}
		&k_{i,j}=k_{i+1,j},\ \text{for}\ 1\leq  i\leq m-3\ \text{and}\ 1\leq j\leq g,&\\
		&l_{i,j}=l_{i+1,j},\ \text{for}\ 1\leq  i\leq m-3\ \text{and}\ 1\leq j\leq g,\nonumber&\\
		&n_i=n_{i+1},\ \text{for}\ 1\leq  i\leq m-3.\nonumber
		\end{align}
		In the case where $i=m-2$ we have the following:
			\begin{flalign*}
			s_*(\tau_{m-2}\tau_{m-1}\tau_{m-2}) &\approx 
			\tau_{m-2}z_1^{m_{m-2,1}}\cdots z_{m-2}^{m_{m-2,{m-2}}}z_{m-1}^{m_{m-2,{m-1}}}\tau_{m-1} z_1^{m_{m-1,1}}\cdots z_{m-2}^{m_{m-1,{m-2}}}z_{m-1}^{m_{m-1,{m-1}}}&\\
			&\quad \tau_{m-2}z_1^{m_{m-2,1}}\cdots z_{m-2}^{m_{m-2,{m-2}}}z_{m-1}^{m_{m-2,{m-1}}}&\\ 
			&\approx\tau_{m-2}\tau_{m-1}\tau_{m-2}z_1^{2m_{m-2,1}+m_{m-1,1}}\cdots
			z_j^{2m_{m-2,j}+m_{m-1,j}}\cdots 
			z_{m-2}^{m_{m-1,{m-1}}+m_{m-2,m-2}}&\\
			&\quad z_{m-1}^{m_{m-2,m-2}+m_{m-1,m-2}+m_{m-2,m-1}}
			 z_m^{m_{m-2,m-1}}
		\end{flalign*}
		and similarly
		\begin{flalign*}
			s_*(\tau_{m-1}\tau_{m-2}\tau_{m-1}) &\approx\tau_{m-1}\tau_{m-2}\tau_{m-1}z_1^{2m_{m-1,1}+m_{m-2,1}}\cdots
			z_j^{2m_{m-1,j}+m_{n-2,j}}\cdots &\\
			&\quad z_{m-2}^{m_{m-1,{m-1}}+m_{m-2,m-2}+m_{m-1, m-2}}
			z_{m-1}^{m_{m-1,m-1}}
			z_m^{m_{m-1, m-2}+m_{m-2,m-1}}.
		\end{flalign*}
		Note that the exponent of the element $z_m$ contributes as an exponent to all the other $z_j$, for $1\leq j\leq m-1$, due to relation $(S_4)$. Comparing the exponents of the elements $z_{m-2}, z_{m-1}$ and $z_j$, for $1\leq j\leq m-3$, of $B_{n,m}(S_g)/\Gamma$ in these two equations, we obtain the following:
		\begin{equation}\label{four}
			m_{m-2,j}=m_{m-1,j}-m_{m-1, m-2},\ \text{for}\ 1\leq  j\leq m-3.
		\end{equation}
		\begin{equation}\label{five}
			m_{m-2, m-2}-m_{m-1,m-1}= -(2m_{m-1, m-2}+m_{m-2, m-1}).
		\end{equation}
Moreover, from the equality $s_*(\tau_{m-2}\tau_{m-1}\tau_{m-2})=s_*(\tau_{m-1}\tau_{m-2}\tau_{m-1})$ and from the fact that $a_1,\dots, a_g, b_1,\dots, b_g, \sigma$ commute with all the other elements in the equality, comparing their exponents, we get that:
\begin{align}	\label{six}
	&k_{m-1,j}=k_{m-2,j},\ \text{for}\ 1\leq j\leq g,&\\
	&l_{m-1,j}=l_{m-2,j},\ \text{for}\ 1\leq j\leq g,\nonumber&\\
	&n_{m-1}=n_{m-2},\ \text{for}\ 1\leq  i\leq m-3.\nonumber
\end{align}
	
		We continue with the second relation of \textbf{R1}, $s_*(\tau_i\tau_j)=s_*(\tau_j\tau_i)$, for $i,j\neq m-1$ and $|i-j|\geq 2$. 
		We will discuss the case $i = m-1$ separately. Making use of $(S_8)$ we have the following:
		\begin{flalign*}
		s_*(\tau_i\tau_j)&\approx 
		\tau_i z_1^{m_i,1}\cdots z_{i-1}^{m_{i,i-1}}z_i^{m_{i,i}}z_{i+1}^{m_{i,i+1}}
		\cdots z_{j-1}^{m_{i,j-1}}z_j^{m_{i,j}}z_{j+1}^{m_{i,j+1}}\cdots z_{m-1}^{m_{i, m-1}}&\\
		&\quad\tau_jz_1^{m_{j,1}}z_1^{m_j,1}\cdots z_{i-1}^{m_{j,i-1}}z_i^{m_{j,i}}z_{i+1}^{m_{j,i+1}}
		\cdots z_{j-1}^{m_{j,j-1}}z_j^{m_{j,j}}z_{j+1}^{m_{j,j+1}}\cdots z_{m-1}^{m_{j, m-1}}&\\
		&\approx \tau_i\tau_j z_1^{m_{i,1}+m_{j,1}}\cdots z_{i-1}^{m_{i,i-1}+m_{j,i-1}}z_i^{m_{i,i}+m_{j,i}}z_{i+1}^{m_{i,i+1}+m_{j,i+1}}
		\cdots z_{j-1}^{m_{i,j-1}+m_{j,j-1}}&\\
		&\quad z_j^{m_{i,j+1}+m_{j,j}}z_{j+1}^{m_{i,j}+m_{j,j+1}}\cdots z_{m-1}^{m_{i, m-1}+m_{j,m-1}}
    \end{flalign*}
and similarly
	\begin{flalign*}
	s_*(\tau_j\tau_i)&\approx 
	\tau_jz_1^{m_{j,1}}z_1^{m_j,1}\cdots z_{i-1}^{m_{j,i-1}}z_i^{m_{j,i}}z_{i+1}^{m_{j,i+1}}
	\cdots z_{j-1}^{m_{j,j-1}}z_j^{m_{j,j}}z_{j+1}^{m_{j,j+1}}\cdots z_{m-1}^{m_{j, m-1}}&\\
	&\quad \tau_i z_1^{m_i,1}\cdots z_{i-1}^{m_{i,i-1}}z_i^{m_{i,i}}z_{i+1}^{m_{i,i+1}}
	\cdots z_{j-1}^{m_{i,j-1}}z_j^{m_{i,j}}z_{j+1}^{m_{i,j+1}}\cdots z_{m-1}^{m_{i, m-1}}&\\
	&\approx \tau_j\tau_i z_1^{m_{j,1}+m_{i,1}}\cdots z_{i-1}^{m_{j,i-1}+m_{i,i-1}}z_i^{m_{j,i+1}+m_{i,i}}z_{i+1}^{m_{j,i}+m_{i,i+1}}
	\cdots z_{j-1}^{m_{j,j-1}+m_{i,j-1}}&\\
	&\quad z_j^{m_{j,j}+m_{i,j}}z_{j+1}^{m_{j,j+1}+m_{i,j+1}}\cdots z_{m-1}^{m_{j, m-1}+m_{i,m-1}}.
\end{flalign*}
		Comparing the exponents of the elements $z_i$ and $z_{i+1}$, for $i,j\neq m-1$ and $|i-j|\geq 2$, of $B_{n,m}(S_g)/\Gamma$ in these two equations, we obtain the following:
	\begin{equation}\label{onep5}
		m_{j,i}=m_{j,i+1},\ \text{for}\ 1\leq  i,j\leq m-2\ \text{and}\ |i-j|\geq 2.
	\end{equation}
		In the case where $i=m-1$ and $|i-j|\geq 2$ we have the following:
		\begin{flalign*}
			s_*(\tau_{m-1}\tau_j)
			&\approx \tau_{m-1}\tau_j z_1^{m_{m-1,1}+m_{j,1}}\cdots  z_j^{m_{m-1,j+1}+m_{j,j}}z_{j+1}^{m_{m-1,j}+m_{j,j+1}}\cdots z_{m-1}^{m_{m-1, m-1}+m_{j,m-1}}
		\end{flalign*}
		and similarly
	\begin{flalign*}
		s_*(\tau_{j}\tau_{m-1})
		&\approx\tau_j\tau_{m-1} z_1^{m_{j,1}+m_{m-1,1}}\cdots  z_j^{m_{j,j}+m_{m-1,j}}z_{j+1}^{m_{j,j+1}+m_{m-1,j+1}}\cdots z_{m-1}^{m_{m-1,m-1}}z_m^{m_{j,m-1}}.
	\end{flalign*}
		Note that the exponent of the element $z_m$ contributes as an exponent to all the other $z_j$, for $1\leq j\leq m-1$, due to relation $(S_4)$. Comparing the exponents of the elements $z_{j}$ and $z_{j+1}$, for $1\leq j\leq m-3$, of $B_{n,m}(S_g)/\Gamma$ in these two equations, we obtain the following:
		\begin{equation}\label{twop5}
			m_{j,m-1}=0,\ \text{for}\ 1\leq  j\leq m-3.
		\end{equation}
		\begin{equation}\label{threep5}
			m_{m-1, j}=m_{m-1, j+1},\ \text{for}\ 1\leq  j\leq m-3.
		\end{equation}
	Thus, from the obtained relations $\eqref{three}$ and $\eqref{six}$ we set $$k_j:=k_{i,j},\ l_j:=l_{i,j},\ N:=n_i,\ \text{for all}\ 1\leq i\leq m-1\ \text{and}\ 1\leq j\leq g.$$
	It follows that the relation \ref{imageoft} becomes 
\begin{equation}\label{imageoftnew}
s_*(\tau_i) =\tau_i\cdot a_1^{k_1}a_2^{k_2}\cdots a_g^{k_g}b_1^{l_1}b_2^{l_2}\cdots b_g^{l_g}
z_1^{m_{i,1}}z_2^{m_{i,2}}\cdots z_{m-1}^{m_{i,{m-1}}}\sigma^{N},\ \text{for}\ 1\leq i \leq m-1.
\end{equation}

		We continue with the relation $s_*(c_r\tau_i) = s_*(\tau_ic_r)$, for $1< i< m-1$ and $1\leq r\leq g$, of \textbf{R2}.
		We will discuss the case $i = m-1$ separately. So, for $1< i < n-2$, based on the formulas \eqref{imageoftnew}, \eqref{imageofc} and the relations $(S_6)$, $(S_8)$ we have the following:
		\begin{flalign*}
			s_*(c_r\tau_i)
			&\approx c_r\tau_ib_1^{\bar{l}_{r,1}+l_1}\cdots b_g^{\bar{l}_{r,g}+l_g}&\\
			&\quad z_1^{\bar{m}_{r,1}+m_{i,1}}\cdots z_i^{\bar{m}_{r,i+1}+m_{i,i}}
			z_{i+1}^{\bar{m}_{r,i}+m_{i,i+1}}\cdots z_{m-1}^{\bar{m}_{r,m-1}+m_{i,m-1}}
			\sigma^{\bar{n}_r+N}
		\end{flalign*}
		and
		\begin{flalign*}
			s_*(\tau_{i}c_r)
		&\approx\tau_ic_rb_1^{l_1+\bar{l}_{r,1}}\cdots b_g^{l_g+\bar{l}_{r,g}}&\\
		&\quad z_1^{m_{i,1}+l_r+\bar{m}_{r,1}}\cdots z_i^{m_{i,i}+\bar{m}_{r,i}}
		z_{i+1}^{m_{i,i+1}+\bar{m}_{r,i+1}}\cdots z_{m-1}^{m_{i,m-1}+\bar{m}_{r,m-1}}
		\sigma^{N+\bar{n}_r}.
		\end{flalign*}
		Comparing the coefficients of the element $z_1$, $z_i$ and $z_{i+1}$, for $1< i\leq m-2$ of $B_{n,m}(S_g)/\Gamma$ in these two equations, it follows that 
		\begin{equation}\label{onep6}
			l_r=0,\ \text{for}\ 1\leq  r\leq g
		\end{equation}
	and
	\begin{equation}\label{twop6}
		\bar{m}_{r,i}=\bar{m}_{r,i+1},\ \text{for}\ 1< i\leq m-2\ \text{and}\ 1\leq r\leq g.
	\end{equation}
		In the case $i=m-1$ we have the following:
		\begin{flalign*}
			s_*(c_r\tau_{m-1})
			&\approx c_r\tau_{m-1}b_1^{\bar{l}_{r,1}+l_1}\cdots b_g^{\bar{l}_{r,g}+l_g}&\\
			&\quad z_1^{\bar{m}_{r,1}+m_{m-1,1}}\cdots z_i^{\bar{m}_{r,i}+m_{m-1,i}}\cdots z_{m-1}^{m_{m-1,m-1}}z_{m}^{\bar{m}_{r,m-1}}
			\sigma^{\bar{n}_r+N}
		\end{flalign*}
		and
		\begin{flalign*}
			s_*(\tau_{m-1}c_r)
			&\approx\tau_{m-1}c_rb_1^{l_1+\bar{l}_{r,1}}\cdots b_g^{l_g+\bar{l}_{r,g}}&\\
			&\quad z_1^{m_{m-1,1}+\bar{m}_{r,1}}\cdots z_i^{m_{m-1,i}+\bar{m}_{r,i}}
			\cdots z_{m-1}^{m_{m-1,m-1}+\bar{m}_{r,m-1}}
			\sigma^{N+\bar{n}_r}.
		\end{flalign*}
		Note that the exponent of the element $z_m$ contributes as an exponent to all the other $z_j$, for $1\leq j\leq m-1$, due to relation $(S_4)$.
		Comparing the coefficients of the element $z_1$ , it follows that 
		\begin{equation}\label{threep6}
			\bar{m}_{r,m-1}=0,\ \text{for}\ 1\leq  r\leq g.
		\end{equation}
		
		Similarly, from the relation of \textbf{R2}, $s_*(d_r\tau_i) = s_*(\tau_id_r)$, for $1< i< m-1$ and $1\leq r\leq g$, and with the use of the relation $(S_7)$, $(S_8)$ we get that 
			\begin{equation}\label{fourp7}
			k_r=0,\ \text{for}\ 1\leq  r\leq g.
		\end{equation}
			and
		\begin{equation}\label{fivep7}
			\tilde{m}_{r,i}=\tilde{m}_{r,i+1},\ \text{for}\ 1< i\leq m-2\ \text{and}\ 1\leq r\leq g.
		\end{equation}
		In addition, for $i=m-1$ from the relation $s_*(d_r\tau_{m-1}) = s_*(\tau_{m-1}d_r)$, for $1\leq r\leq g$, it follows that
		\begin{equation}\label{sixp7}
		\tilde{m}_{r,m-1}=0,\ \text{for}\ 1\leq  r\leq g.
	\end{equation}
		
	The calculation of the remaining relations \textbf{R3}, \textbf{R4} and \textbf{R5} is straightforward and we present directly the results one will obtain. However, the last relation \textbf{R6} will be presented in detail.
	
	From the relation of \textbf{R3}, $s_*(\tau_1^{-1}c_r\tau_{1}^{-1}c_r)=s_*(c_r\tau_1^{-1}c_r\tau_{1}^{-1})$, for $1\leq r\leq g,$ and using the relations $(S_6)$, $(S_8)$, comparing the exponents of the element $z_1$ we get that 
	\begin{equation}\label{onep8}
		\bar{l}_{r,r}=0,\ \text{for}\ 1\leq  r\leq g.
	\end{equation}
From the relation of \textbf{R3}, $s_*(\tau_1^{-1}d_r\tau_{1}^{-1}d_r)=s_*(d_r\tau_1^{-1}d_r\tau_{1}^{-1})$, for $1\leq r\leq g,$ and using the relations $(S_7)$, $(S_8)$, comparing the exponents of the element $z_1$ we obtain that 
\begin{equation}\label{twop8}
	\tilde{k}_{r,r}=0,\ \text{for}\ 1\leq  r\leq g.
\end{equation}

     From the relation of \textbf{R4}, $s_*(\tau_1^{-1}c_s\tau_1c_r)=s_*(c_r\tau_1^{-1}c_s\tau_1)$, for $s < r$,
     and using the relations $(S_6)$, $(S_8)$, comparing the exponents of the elements $z_1$ and $z_2$ we obtain that 
     \begin{equation}\label{onep9}
     	\bar{l}_{p,q}=0,\ \text{for}\ 1\leq  p\neq q\leq g.
     \end{equation}
	From the relation of \textbf{R4}, $s_*(\tau_1^{-1}c_s\tau_1d_r)=s_*(d_r\tau_1^{-1}c_s\tau_1)$, for $s < r,$ 
	and using the relations $(S_6)$, $(S_7)$, $(S_8)$, comparing the exponents of the elements $z_1$ and $z_2$ we obtain that 
	\begin{equation}\label{onep10}
		\bar{k}_{s,r}=0,\ \text{for}\ 1\leq  s<r\leq g,
	\end{equation}      
and
	\begin{equation}\label{twop10}
	\tilde{l}_{r,s}=0,\ \text{for}\ 1\leq  s<r\leq g.
\end{equation}   
	From the relation of \textbf{R4}, $s_*(\tau_1^{-1}d_s\tau_1d_r)=s_*(d_r\tau_1^{-1}d_s\tau_1)$, for $s<r,$ 
	and using the relations $(S_7)$, $(S_8)$, comparing the exponents of the elements $z_1$ and $z_2$ we obtain that 
	\begin{equation}\label{twop9}
		\tilde{k}_{p,q}=0,\ \text{for}\ 1\leq  p\neq q\leq g.
	\end{equation}       
	From the relation of \textbf{R4}, $s_*(\tau_1^{-1}d_s\tau_1c_r)=s_*(c_r\tau_1^{-1}d_s\tau_1)$, for $s < r,$   
	and using the relations $(S_6)$, $(S_7)$, $(S_8)$, comparing the exponents of the elements $z_1$ and $z_2$ we obtain that 
	\begin{equation}\label{one+p10}
		\tilde{l}_{s,r}=0,\ \text{for}\ 1\leq  s<r\leq g.
	\end{equation}      
	and
	\begin{equation}\label{two+p10}
		\bar{k}_{r,s}=0,\ \text{for}\ 1\leq  s<r\leq g.
	\end{equation}   
	
	From the relation of \textbf{R5}, $s_*(\tau_1^{-1}c_r\tau_{1}^{-1}d_r)=s_*(d_r\tau_1^{-1}c_r\tau_{1})$, for $1\leq r\leq g$,
	and using the relations $(S_6)$, $(S_7)$, $(S_8)$, comparing the exponents of the elements $z_1, z_2$ and $z_i$, for $3\leq i\leq m-1$ we obtain that 
	\begin{equation}\label{onep11}
		\bar{k}_{r,r}=-(m_{1,1}+m_{1,2}),\ \text{for}\ 1\leq  r\leq g,
	\end{equation}	
		\begin{equation}\label{twop11}
\tilde{l}_{r,r}=-(m_{1,1}+m_{1,2}),\ \text{for}\ 1\leq  r\leq g
		\end{equation}
	and
		\begin{equation}\label{threep11}
			m_{1,i}=0,\ \text{for}\ 3\leq  i\leq m-1.
		\end{equation}
	
	We now analyse the results that we obtained so far. From \eqref{onep6}, \eqref{fourp7} we obtain $l_r=0$ and $k_r=0$, for all $1\leq r\leq g$. From \eqref{onep6}, \eqref{twop5}, \eqref{threep11} it follows that $m_{i,j}=0$, for $1\leq i,j\leq m-2$ and $i+2\leq j$. In addition, from \eqref{one}, \eqref{four}, \eqref{threep5} it follows that $m_{i,j}=0$, for $2\leq i,j\leq m-2$ and $j<i$. All together we get that $m_{i,j}=0$, for $1\leq i,j\leq m-2$ and $j\neq i, i+1$. That it, $m_{i,i}\neq 0$ and $m_{i,i+1}\neq 0$, for $1\leq i\leq m-2$. From \eqref{onep8}, \eqref{onep9} we have $\bar{l}_{r,s}=0$, for all $1\leq r,s\leq g$. Moreover, from \eqref{onep10}, \eqref{two+p10} we get $\bar{k}_{r,s}$, for $1\leq r\neq s\leq g$. From \eqref{twop8}, \eqref{twop9} we get $\tilde{k}_{r,s}=0$, for all $1\leq r, s\leq g$ and from \eqref{twop10}, \eqref{one+p10} we have $\tilde{l}_{r,s}=0$, for $1\leq r\neq s\leq g$. From \eqref{twop6}, \eqref{threep6} it follows that $\bar{m}_{r,i}=0$, for $1<i\leq m-1$ and $1\leq r\leq g$. Similarly from \eqref{fivep7}, \eqref{sixp7} it follows that $\tilde{m}_{r,i}=0$, for $1<i\leq m-1$ and $1\leq r\leq g$. From \eqref{threep5} it holds that $m_{m-1,1}=m_{m-1,1}=\dots=m_{m-1,m-2}$, which integer we will denote by $M$. From \eqref{onep11}, \eqref{twop11} we have $\bar{k}_{r,r}=\tilde{l}_{r,r}=-(m_{1,1}+m_{1,2})$, which integer we will denote by $k$. For simplicity, we will now denote $\bar{m}_{r,1}$ by $\bar{m}_r$, for $1\leq r\leq g$ and similarly, $\tilde{m}_{r,1}$ by $\tilde{m}_r$, for $1\leq r\leq g$. Lastly, from \eqref{two}, \eqref{five} we obtain that $m_{1,1}+m_{1,2}=m_{1,1}+m_{1,2}=\dots=m_{m-2,m-2}+m_{m-2,m-1}=m_{m-1,m-1}-2M$.
	
Thus, combining all these results, the relations \eqref{imageoftnew},\eqref{imageofc} and \eqref{imageofd} become as follows:
\begin{flalign*}
&s_*(\tau_i) =\tau_1z_i^{m_{i,i}}z_{i+1}^{m_{i,i+2}}\sigma^N,\ \text{for}\ 1\leq i \leq m-2,&\\
&s_*(\tau_{m-1}) =\tau_{m-1}z_1^Mz_2^M\cdots z_{m-2}^Mz_{m-1}^{m_{m-1,m-1}}\sigma^N,&\\
&s_*(c_r) =c_ra_r^kz_1^{\bar{m}_{r}}\sigma^{\bar{n}_r},\ \text{for}\ 1\leq r \leq g,&\\
&s_*(d_r) =d_rb_r^kz_1^{\tilde{m}_{r}}\sigma^{\tilde{n}_r},\ \text{for}\ 1\leq r \leq g,&\\
&\text{where,}&\\
&k:=-(m_{1,1}+m_{1,2}),&\\ 
&M:=m_{m-1,i},\ \text{for all}\ 1\leq i\leq m-2,&\\
&m_{i,i}+m_{i,i+1}=m_{m-1,m-1}-2M,\ \text{for all}\ 1\leq i\leq m-2.
\end{flalign*}
		
	Finally, with these new images of $\tau_i$, $c_r$ and $d_r$, under $s_*$, we will examine the relation \textbf{R6}, $s_*([c_1,d_1^{-1}]\cdots[c_g, d_g^{-1}])=s_*(\tau_{1}\tau_{2}\cdots\tau_{m-1}^2\cdots\tau_{2}\tau_{1})$. Note that $s_*(\tau_{1}\tau_{2}\cdots\tau_{m-1}^2\cdots\tau_{2}\tau_{1})=s_*(\tau_1\tau_2\cdots\tau_{m-2}\tau_{m-1})s_*(\tau_{m-1}\tau_{m-2}\cdots\tau_2\tau_1)$.
		We will compute the image of $s_*(\tau_1\tau_2\cdots\tau_{m-2}\tau_{m-1})$ and of $s_*(\tau_{m-1}\tau_{m-2}\cdots\tau_2\tau_1)$ separately, making use of the relation $(S_8)$.
		\begin{flalign*}
			s_*(\tau_1\tau_2\cdots\tau_{m-2}\tau_{m-1}) 
			&=\tau_1z_1^{m_{1,1}}z_2^{m_{1,2}}\sigma^N
			\tau_2z_2^{m_{2,2}}z_3^{m_{2,3}}\sigma^N\cdots\tau_iz_i^{m_{i,i}}z_{i+1}^{m_{i,i+1}}\sigma^N\cdots&\\
			&\quad\tau_{m-2}z_{m-2}^{m_{m-2,m-2}}z_{m-1}
			^{m_{m-2,m-1}}\sigma^N\quad t_{m-1}z_1^Mz_2^M\cdots z_{m-2}^Mz_{m-1}^{m-1,m-1}\sigma^N&\\
			&=\tau_{1}\tau_{2}\cdots\tau_{i}\cdots\tau_{m-1}&\\
			&=\tau_1\tau_2\cdots\tau_{m-2}\tau_{m-1}z_1^{M+m_{1,1}-\mu}
			z_2^{M+m_{2,2}-\mu}\cdots z_i^{M+m_{i,i}-\mu}\cdots z_{m-2}^{M+m_{m-2,m-2}-\mu}&\\
			&\quad z_{m-1}^{m_{m-1,m-1}-\mu}\sigma^{(m-1)N},&\\
\text{where}\ \mu:=m_{1,2}+&m_{2,3}+\dots+m_{i,i+1}+\dots+m_{m-2,m-1}.
							\end{flalign*}
Similarly,
		\begin{flalign*}
			s_*(\tau_{m-1}\tau_{m-2}\cdots\tau_2\tau_1)
			&= \tau_{m-1}\tau_{m-2}\cdots\tau_2\tau_1z_1^{m_{m-1,m-1}+m_{m-2,m-2}
			+\dots+m_{i,i}+\dots+m_{1,1}}&\\
		&\quad z_2^{M+m_{1,2}}\cdots z_i^{M+m_{i-1,i}}\cdots z_{m-2}^{M+m_{m-3,m-2}}z_{m-1}^{M+m_{m-1,m-1}}\sigma^{(m-1)N}.
		\end{flalign*}
		Therefore,
		\begin{flalign*}
			s_*(\tau_1\tau_2\cdots\tau_{m-2}\tau_{m-1}^2\tau_{m-2}\cdots\tau_2\tau_1) &=
			s_*(\tau_1\tau_2\cdots\tau_{m-2}\tau_{m-1}) 
			s_*(\tau_{m-1}\tau_{m-2}\cdots\tau_2\tau_1)&\\
			&=\tau_1\tau_2\cdots\tau_{m-2}\tau_{m-1}z_1^{M+m_{1,1}-\mu}
			z_2^{M+m_{2,2}-\mu}\cdots z_i^{M+m_{i,i}-\mu}\cdots&\\ &\quad z_{m-2}^{M+m_{m-2,m-2}-\mu}&\\
			&\quad z_{m-1}^{m_{m-1,m-1}-\mu}\sigma^{(m-1)N}&\\
			& \quad \tau_{m-1}\tau_{m-2}\cdots\tau_2\tau_1z_1^{m_{m-1,m-1}+m_{m-2,m-2}
				+\dots+m_{i,i}+\dots+m_{1,1}}&\\
			&\quad z_2^{M+m_{1,2}}\cdots z_i^{M+m_{i-1,i}}\cdots&\\ &\quad z_{m-2}^{M+m_{m-3,m-2}}z_{m-1}^{M+m_{m-1,m-1}}\sigma^{(m-1)N}&\\
			&=\tau_1\tau_2\cdots\tau_{m-2}\tau_{m-1}^2\tau_{m-2}\cdots\tau_2\tau_1&\\
			&\quad z_1^{m_{1,1}+\dots+m_{i,i}+\dots+m_{m-1,m-1}-m_{m-1,m-1}+\mu}&\\
			&\quad z_2^{2M+m_{1,1}+m_{1,2}-m_{m-1,m-1}+\mu-\mu}\cdots&\\
			 &\quad z_i^{2M+m_{i-1,i-1}+m_{i-1,i}-m_{m-1,m-1}+\mu-\mu}\cdots&\\
			&\quad z_{m-1}^{2M+m_{m-2,m-2}+m_{m-2,m-1}-m_{m-1,m-1}+\mu-\mu}&\\
			&=\tau_1\tau_2\cdots\tau_{m-2}\tau_{m-1}^2\tau_{m-2}\cdots\tau_2\tau_1z_1^{(m-2)(m_{m-1,m-1}-2M)}\cdots z_i^0\cdots z_{m-1}^0&\\
			&=\tau_1\tau_2\cdots\tau_{m-2}\tau_{m-1}^2\tau_{m-2}\cdots\tau_2\tau_1\cdot z_1^{(m-2)(m_{m-1,m-1}-2M)}.
		\end{flalign*}
		Moreover, with the use of the relations $(S_6)$, $(S_7)$ we have the following:
		\begin{flalign*}
			s_*([c_1, d_1^{-1}]\cdots[c_g, d_g^{-1}]) &= 
			s_*(c_1d_1^{-1}c_1^{-1}d_1\cdots c_rd_r^{-1}c_r^{-1}d_r\cdots c_gd_g^{-1}c_g^{-1}d_g)&\\
			&=c_1a_1^kz_1^{\bar{m}_1}\sigma^{\bar{n}_1}d_1^{-1}b_1^{-k}
			z_1^{-\tilde{m}_1}\sigma^{-\tilde{n}_1}c_1^{-1}
			a_1^{-k}z_1^{-\bar{m}_1}\sigma^{-\bar{n}_1}d_1b_1^{k}
			z_1^{\tilde{m}_1}\sigma^{\tilde{n}_1}\cdots&\\
			&\quad c_ra_r^kz_1^{\bar{m}_r}\sigma^{\bar{n}_r}d_r^{-1}b_r^{-k}
			z_1^{-\tilde{m}_r}\sigma^{-\tilde{n}_r}c_r^{-1}
			a_r^{-k}z_1^{-\bar{m}_r}\sigma^{-\bar{n}_r}d_rb_r^{k}
			z_1^{\tilde{m}_r}\sigma^{\tilde{n}_r}\cdots&\\
			&\quad c_ga_g^kz_1^{\bar{m}_g}\sigma^{\bar{n}_g}d_g^{-1}b_g^{-k}
			z_1^{-\tilde{m}_g}\sigma^{-\tilde{n}_g}c_g^{-1}
			a_g^{-k}z_1^{-\bar{m}_g}\sigma^{-\bar{n}_g}d_gb_g^{k}
			z_1^{\tilde{m}_g}\sigma^{\tilde{n}_g}&\\
			&=c_1d_1^{-1}c_1^{-1}d_1\cdots c_rd_r^{-1}c_r^{-1}d_r\cdots c_gd_g^{-1}c_g^{-1}d_gz_1^{2kg}&\\
			&=[c_1, d_1^{-1}]\cdots[c_g, d_g^{-1}]\cdot z_1^{2kg}.
		\end{flalign*}
		From $(S_5)$ we have that $[c_1, d_1^{-1}]\cdots[c_g, d_g^{-1}]\tau_{1}^{-1}\tau_{2}^{-1}\cdots\tau_{m-1}^{-2}\cdots\tau_{2}^{-1}\tau_{1}^{-1}=z_1^{-n}$ in $B_{n,m}(S_g)/\Gamma$. Therefore, 
		\begin{flalign*}
		[c_1, d_1^{-1}]\cdots[c_g,d_g^{-1}]&=
		z_1^{-n}\cdot\tau_1\tau_2\cdots\tau_{m-2}
		\tau_{m-1}^2\tau_{m-2}\cdots\tau_2\tau_1&\\
		&=\tau_1\tau_2\cdots\tau_{m-2}
		\tau_{m-1}^2\tau_{m-2}\cdots\tau_2\tau_1\cdot z_1^{-n}.
     	\end{flalign*}
		Thus, $s_*([c_1, d_1^{-1}]\cdots[c_g, d_g^{-1}])= \tau_1\tau_2\cdots\tau_{m-2}
		\tau_{m-1}^2\tau_{m-2}\cdots\tau_2\tau_1\cdot z_1^{-n}z_1^{2kg}$.
		
		Comparing the exponents of the element $z_1$ of the equality $s_*([c_1, d_1^{-1}]\cdots[c_g, d_g^{-1}])=	s_*(\tau_1\tau_2\cdots\tau_{m-2}\tau_{m-1}^2\tau_{m-2}\cdots\tau_2\tau_1)$ we obtain the following: $2kg-n=(m-2)(m_{m-1,m-1}-2M)$. That is, 
		\begin{flalign*}
		n&=2kg-(m-2)(m_{m_{m-1,m-1}}-2M)&\\
		&=2kg-(m-2)(-k)&\\
		&=2kg+k(m-2).
		\end{flalign*}
	
	We deduce that $n=km+k(2g-2)$, for $k\in \mathbb{N}$, which concludes the proof.
	\end{proof}

\begin{remark}\label{cases}
For $g\geq 2$ and $m=2$, using the the same strategy as in the proof of Proposition \ref{prop1} we obtain that $n=2kg$, for $k\in \mathbb{N}$. In particular, we examine the relations \textbf{R3}-\textbf{R6}, since the relations \textbf{R1}-\textbf{R2} do not hold for $m=2$.

For $g\geq 2$ and $m=3$, using the the same strategy as in the proof of Proposition \ref{prop1} we obtain that $n=k(2g+1)$, for $k\in \mathbb{N}$. In particular, we examine the relations \textbf{R1}-\textbf{R6}, except for the relation $s_*(\tau_{i}\tau_{j})=s_*(\tau_{j}\tau_{i})$ of \textbf{R1}, for $|i-j|>1$, which does not hold for $m=3$.

For $g=1$ and $m=2$, using the the same strategy as in the proof of Proposition \ref{prop1} we obtain that $n=2k$, for $k\in \mathbb{N}$. In particular, we examine the relations \textbf{R3}, \textbf{R5}, \textbf{R6}, since the relations \textbf{R1}, \textbf{R2}, \textbf{R4} do not hold for $g=1$ and $m=2$.

For $g=1$ and $m=3$, using the the same strategy as in the proof of Proposition \ref{prop1} we obtain that $n=3k$, for $k\in \mathbb{N}$. In particular, we examine the relations \textbf{R1}-\textbf{R6}, except for the relation $s_*(\tau_{i}\tau_{j})=s_*(\tau_{j}\tau_{i})$ of \textbf{R1}, for $|i-j|>1$, and the set of relations of \textbf{R4}, which does not hold for $g=1$ and $m=3$.

For $g=1$ and $m\geq 4$, using the the same strategy as in the proof of Proposition \ref{prop1} we obtain that $n=mk$, for $k\in \mathbb{N}$, examining the relations \textbf{R1}-\textbf{R6}, except for the set of relations of \textbf{R4}, which does not hold for $g=1$.
\end{remark}

In the following proposition we provide a geometric section from which we can deduce an algebraic section for $B_{n,1}(S_g)\to B_1(S_g)$, using a similar construction, given by Gonçalves--Guaschi in \cite{gonccalves2004structure},  the case of pure braid groups.

\begin{proposition}\label{prop2}
Let $g\geq 1$, $n\geq 1$ and $m=1$. The short exact sequence 
\begin{equation}
	\begin{tikzcd}
		1 \ar[r]& B_n(S_g\setminus\{x_1\}) \ar[r] & B_{n,1}(S_g) \ar[r, "\bar{q}"] & B_1(S_g)\ar[r] & 1,
	\end{tikzcd}
\end{equation}
splits for all values of $n$ and $g$.
\end{proposition}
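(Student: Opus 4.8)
The plan is to construct the section geometrically and then read off the algebraic splitting, exactly as in the pure braid setting of \cite{gonccalves2004structure}. By Remark \ref{equiv} it suffices to exhibit a continuous section $\iota\colon UF_1(S_g)=S_g\to UF_{n,1}(S_g)$ of the fibration $q$; concretely, a continuous rule $x\mapsto(x,C(x))$ assigning to each $x\in S_g$ an unordered configuration $C(x)$ of $n$ points of $S_g\setminus\{x\}$. Since $B_1(S_g)\cong\pi_1(S_g)$ is generated by the loops $a_1,b_1,\dots,a_g,b_g$ subject only to the single surface relation $\prod_{r=1}^{g}[a_r,b_r^{-1}]=1$, producing such a section amounts to choosing lifts $\bar s(a_r),\bar s(b_r)\in B_{n,1}(S_g)$ of these generators under $\bar q$ for which this one relation continues to hold in $B_{n,1}(S_g)$.

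First I would fix a model of $S_g$ as a $4g$-gon with the standard edge identifications $\alpha_r,\beta_r$, all vertices glued to a single point $v$, and attach the $n$ auxiliary points to a small disk travelling with the marked point. Representing the generator $a_r$ (resp.\ $b_r$) by a path of the marked point across the edge $\alpha_r$ (resp.\ $\beta_r$) that drags this cluster along and returns it to its starting disk, I obtain braids $\bar s(a_r),\bar s(b_r)$ with $\bar q(\bar s(a_r))=a_r$ and $\bar q(\bar s(b_r))=b_r$, since only the marked $(m=1)$ strand survives the forgetful map. Equivalently, over $S_g\setminus\{v\}$ the section is built from the flow of a nowhere-zero vector field on this open surface, the added points being the images of $x$ under $n$ small flow-times; the whole difficulty is then concentrated at the single point $v$.

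The heart of the proof is to verify the surface relation $\prod_{r=1}^{g}[\bar s(a_r),\bar s(b_r)^{-1}]=1$ in $B_{n,1}(S_g)$, equivalently that the section defined on $S_g\setminus\{v\}$ extends continuously over $v$. Geometrically the left-hand product corresponds to the marked point traversing the boundary of the $4g$-gon, i.e.\ a small loop around $v$, and the monodromy it induces on the cluster $C(x)$ is a rotation/twist whose total turning is controlled by the index $\chi(S_g)=2-2g$ of the vector field at $v$. The main obstacle is to show that this monodromy braid can be made trivial in the fibre $B_n(S_g\setminus\{x\})$ for \emph{every} $n$ and $g$, by a suitable choice of the dragging paths (equivalently, by modifying the lifts $\bar s(a_r),\bar s(b_r)$ by elements of the kernel). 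The key point — and the reason $m=1$ behaves so differently from $m\ge 2$ — is that a single marked strand imposes no $\sigma$-type relation on the base, so one is free to let the $n$ \emph{unordered} auxiliary points braid among themselves as they are carried around $v$; this extra freedom should absorb the Euler-characteristic twist for all $n$, whereas for $m\ge 2$ the analogous cancellation is obstructed and forces the divisibility $n=k(m+2g-2)$ of Proposition \ref{prop1}.

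Once this single relation is verified, the assignment $a_r\mapsto\bar s(a_r)$, $b_r\mapsto\bar s(b_r)$ defines a homomorphism $\bar s\colon B_1(S_g)\to B_{n,1}(S_g)$ with $\bar q\circ\bar s=\mathrm{id}$, i.e.\ an algebraic splitting of \eqref{ms} for $m=1$, valid for all $n$ and $g$. I expect the delicate step to be making the cluster-dragging precise enough that the monodromy braid around $v$ can be computed and trivialised uniformly in $n$; the remaining checks — that $\bar q$ sends the constructed braids to the chosen generators, and that the construction is continuous — are routine.
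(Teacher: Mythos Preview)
Your outline has a genuine gap at exactly the point you flag as delicate: you never show that the monodromy braid of the cluster around the vertex $v$ can be trivialised. Asserting that the unorderedness of the $n$ auxiliary points gives ``extra freedom'' to absorb an Euler-characteristic twist is not an argument; concretely, the product $\prod_r[\bar s(a_r),\bar s(b_r)^{-1}]$ lands on some specific element $w$ of the fibre $B_n(S_g\setminus\{x_1\})$, and you would have to exhibit modifications of the lifts that kill $w$. For $g=1$ the torus carries a nowhere-zero vector field and your flow construction goes through directly, but for $g\ge 2$ the index at $v$ is $2-2g\ne 0$ and you give no mechanism to cancel the resulting twist-type braid. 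The sentence ``this extra freedom should absorb the Euler-characteristic twist for all $n$'' is precisely the content of the proposition, so as written the argument is circular.

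The paper sidesteps this difficulty entirely with a much simpler construction. Choose a non-separating simple closed curve $C\subset S_g$ (a meridian of one handle); since $g\ge 1$, such a $C$ is a retract of $S_g$ via some $r\colon S_g\to C$. For $i=1,\dots,n$ set $f_i=R_i\circ r$, where $R_i$ is rotation of $C$ by angle $2\pi i/(n+1)$. The maps $\mathrm{id}_{S_g},f_1,\dots,f_n$ are then pairwise coincidence-free on all of $S_g$, so $x\mapsto\bigl(\{f_1(x),\dots,f_n(x)\},x\bigr)$ is a globally defined continuous section of $q$. No vector field, no vertex singularity, and no monodromy computation are needed; the existence of the retract onto a circle (which is exactly what fails for $g=0$) does all the work.
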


\begin{proof}
Let $g\geq 1$, $n\geq 1$ and $m=1$. In order to provide an algebraic section for the map $\bar{q}: B_{n,1}(S_g)\to B_1(S_g)$ we will construct a geometric section in the level of configuration spaces, see Remark \ref{equiv}. That is, we will present a section for the map $q: UF_{n,1}(S_g)\to UF_1(S_g)$, defined by forgetting the first $n$ coordinates. We construct such a section in the following way.

 We known that there exists a circle $C$ on $S_g$ that is a retract of $S_g$. That is, for $i:C\to S_g$, the inclusion of $C$ into $S_g$, there exists a continuous map $r:S_g\to C$, such that, for every $x\in C$, $r(x)=x$ and $r\circ i=\text{id}_{C}$. We take $C$ to be any meridian of one of the handles of $S_g$. Thus, for such a choice of $C$ there exists indeed a retraction of $S_g$ onto $C$, since $C$ is a non-separating circle of $S_g$ as it is one of the meridians.
 
 Using this retraction, we will define $n$ pairwise coincidence-free continuous self maps of $S_g$. For $1=1,\dots, n$, we define $f_i:S_g\to S_g$ to be $R_i\circ r$, the composition of the retraction $r$ with the rotation $R_i$ of $C$ of angle $2\pi i/(n+1)$. With the rotation we ensure that for every point $x\in S_g$ it holds that $f_i(x)\neq f_j(x)$ and $f_i(x)\neq \text{id}_{S_g}(x)$, for all $i,j$ such that $1\leq i, j\leq n$ and $i\neq j$. 
 Thus, the following $n+1$ continuous self maps of $S_g$, $\text{id}_{S_g}, f_1, \dots, f_n$,  are pairwise coincidence-free.
 
 Thus, by setting $g=f_1\times\cdots\times f_n\times\text{id}_{S_g} $ we construct a continuous map from $UF_1(S_g)$ to $UF_{n,1}(S_g)$, that is indeed a geometric section since $q\circ g=\text{id}_{UF_1(S_g)}$. As a result, we deduce, for all $g\geq 1$ and $n\geq 1$, a section for the map $\bar{q}: B_{n,1}(S_g)\to B_1(S_g)$.

\end{proof}

\begin{proof}[Proof of Theorem \ref{thm1}]
From Proposition \ref{prop1} and Remark \ref{cases}, for $g\geq 1$, $n\geq 1$ and $m>1$, we have that, if there is a section for $B_{n,m}(S_g)\to B_m(S_g)$, then $n=km+k(2g-2)$, for $k\in\mathbb{N}$. Moreover, from Proposition \ref{prop2}, it holds that $B_{n,1}(S_g)\to B_1(S_g)$ admits a section for all values of $n\geq 1$ and $g\geq1$, which completes the proof.
\end{proof}

\section*{Acknowledgement}
The author gratefully acknowledges the Max Planck Institute for Mathematics in Bonn for its hospitality and financial support during a research stay, which enabled the initial phase of this research project.

\bibliographystyle{plain}
\bibliography{bibliSP}
	\end{document}